\def\C{\mathbb{C}}
\def\Z{\mathbb{Z}}
\def\Q{\mathbb{Q}}
\def\fP{\mathfrak{P}}  
\def\SL{\mathrm{SL}}
\newtheorem{theorem}{Theorem}[section]
\newtheorem{definition}{Definition}[section]
\newtheorem{lemma}[theorem]{Lemma}
\newtheorem{proposition}[theorem]{Proposition}
\newtheorem{conjecture}{Conjecture}[section]
\title{Congruences for the ratios of Rankin--Selberg $L$-functions}
\author{\bf P. Narayanan \ \ \ \& \ \ \ A. Raghuram}
\date{\today}
\subjclass[2020]{11F33; 11F30, 11F67}
\address{Dept.\,of Mathematics, Indian Institute of Science Education and Research, Dr.\,Homi Bhabha Road, Pune 411008, INDIA.}
\email{narayanan.p@students.iiserpune.ac.in}
\address{Dept.\,of Mathematics, Fordham University at Lincoln Center, New York, NY 10023, USA.} 
\email{araghuram@fordham.edu}
\keywords{Congruences, Special Values, Rankin--Selberg $L$-functions}
\begin{document}

\begin{abstract}
A well-known principle states that a congruence between objects should give rise to a corresponding congruence between the 
special values of $L$-functions attached to these 
objects. We computationally investigate this principle for Rankin--Selberg $L$-functions attached to pairs of holomorphic cuspforms, and 
formulate a precise conjecture in general. 
\end{abstract}

\maketitle

{\small \tableofcontents }

\section{Introduction}

A well-known principle with origins in Iwasawa theory states that a congruence between objects should translate to a congruence between the special values of 
$L$-functions attached to these 
objects. The reader is referred to Vatsal \cite{vatsal} for an instance of this principle, and for a brief discussion of its historical origins. In this article we computationally 
investigate this principle for Rankin--Selberg $L$-functions attached to pairs of holomorphic cuspforms. 

\smallskip

Suppose we have two primitive 
holomorphic cusp forms $f, f' \in S_k(\Gamma_1(N))$; by a form being primitive, one means it is an eigenform, a newform, and is normalised by taking the first 
Fourier coefficient to be $1$. 
Suppose 
$f(z) = \sum_{n=1}^\infty a(n,f) e^{2\pi i n z}$ and $f'(z) = \sum_{n=1}^\infty a(n,f') e^{2\pi i n z}$ are their Fourier expansions. 
Let $\Q(f)$ (resp., $\Q(f')$) denote the number field 
generated by the Fourier coefficients of $f$ (resp., $f'$), and take $F$ to be any number field containing $\Q(f)$ and $\Q(f')$. 
Let $\fP$ be a prime ideal of the ring of integers of 
$F$. We say $f$ is congruent to $f'$ modulo $\fP$ if $a(n,f) \equiv a(n,f') \pmod{\fP}$ for all $n \geq 1$; denote this congruence as $f \equiv f' \pmod{\fP}.$ 

\smallskip

Suppose $g \in S_l(\Gamma_1(M))$. 
For the Rankin--Selberg $L$-function $L(s, f \times g)$ to have critical points one needs $k \neq l$. 
Assume for the moment that $k > l$. Then the set of critical points of $L(s, f \times g)$ is $\{m \in \Z : l \leq m \leq k-1\}$. Since $f$ and $f'$ have the same weight, 
this set is also the set of critical points for $L(s, f' \times g).$
For any of these critical points, one expects a congruence between the algebraic parts of $L(m, f \times g)$ and $L(m, f' \times g).$ 
Of course, one has to be careful in clarifying what one means by the algebraic part of an $L$-value. Keeping in mind the shape of the rationality
results due to Shimura for these $L$-values (see Thm.\,\ref{thm:main-theorem}), one observes that if one had at least two critical points, then the ratio 
$L(m, f \times g)/L(m+1, f \times g)$ of successive critical values for the completed $L$-function is algebraic and 
lies in the compositum of the number fields $\Q(f)$ and $\Q(g)$. 
Let us enlarge the number field $F$ above also to contain $\Q(g)$. The purpose of this paper is to investigate the principle: 
$$
f \equiv f' \pmod{\fP} \ \ \implies \ \ \frac{L(m, f \times g)}{L(m+1, f \times g)} \equiv \frac{L(m, f' \times g)}{L(m+1, f' \times g)} \pmod{\fP}.
$$
It is possible that the ratios of $L$-vaues are not integral in $F$; the congruence is to be interpreted as the difference of the 
ratios has a positive $\mathfrak{P}$-adic valuation.

\smallskip
An obvious variation to consider is to take $k < l,$ i.e., freeze the larger weight cuspform $f$ and take two congruent cuspforms 
$g,g'$ of smaller weight. One may also consider a 
congruence between a cuspform and an Eisenstein series such as the celebrated Ramanujan congruence.  
If $g$ is an Eisenstein series then $L(s, f \times g)$ is a product of the form
$L(s, f \otimes \chi_1)\cdot L(s, f \otimes \chi_1)$ for algebraic Hecke characters $\chi_1$ and $\chi_2$ of $\Q$. 

\smallskip
In this article, we verify that the expected congruences hold in the following cases: 

\begin{enumerate}
\item $f, f' \in S_{24}(\SL_2(\Z))$, $\fP = 144169$, $g \in S_{12}(\SL_2(\Z))$.

\smallskip
\item $f, f' \in S_{30}(\SL_2(\Z))$, $\fP = 51349$, $g \in S_{12}(\SL_2(\Z))$.

\smallskip
\item  $f, f' \in S_{13}(\Gamma_0(3), \chi)$, $\fP = 13$, $g \in S_{6}(\Gamma_0(3))$.

\smallskip
\item $f \in S_{26}(\SL_2(\Z))$, and $g, g' \in S_{13}(\Gamma_0(3), \chi)$, $\fP = 13$. 

\smallskip
\item $f \in S_{24}(\SL_2(\Z))$, $g \in S_{12}(\SL_2(\Z))$, $g'$ a weight $12$ Eisenstein series, $\fP = 691.$ \\ (This is the Ramanujan congruence.) 
\end{enumerate}
In (1) and (2), $f'$ is a Galois conjugate of $f$, but in (3) $f'$ is not a Galois conjugate of $f$. 

\smallskip

The computations rely on two algorithms. 
The first algorithm, reviewed in \ref{subsection: algorith-gl2-x-gl2}, computes the special values of Rankin-Selberg $L$-functions, and is essentially due to Shimura and Hida. 
Roughly speaking, it relies on the interpretation due to Shimura \cite{shimura} of the $L$-value as a Petersson inner product of a cusp form with a holomorphic 
projection, and the computation due to Hida \cite{hida} of the holomorphic projection recursively from the given data and an appropriate Eisenstein series. 
The second well-known algorithm, reviewed in \ref{subsection: algorith-gl2}, 
computes the $L$-values of modular forms from the Hecke-equivariant pairing between a space of cusp forms 
and a suitable space of modular symbols. It should be mentioned that there is also an algorithm by Tim Dokchitser \cite{dokchitser} to compute the $L$-values of modular 
forms numerically.
All the computations in this paper are done on SAGE \cite{sage}.

\medskip

Based on these examples, we formulate a conjecture in Conj.\,\ref{conj:main}. In the companion article \cite{narayanan-raghuram}, we prove a 
variation of this conjecture using the machinery of Eisenstein cohomology as developed by 
Harder and the second author in \cite{harder-raghuram}. As the proofs in \cite{harder-raghuram} involve a cohomological interpretation of 
Langlands's theorem on the constant term of an Eisenstein series, the results and proofs in \cite{narayanan-raghuram} are completely independent of 
the computational techniques this paper.

\bigskip

\section{Preliminaries and the main algorithms}

In this section, we begin by reviewing some preliminaries from Shimura \cite{shimura} on modular forms 
and the special values of Rankin-Selberg $L$-functions and some results of Hida \cite{hida} on the holomorphic projection. 
We present two algorithms which will be used to verify the congruences mentioned in the introduction.

\subsection{Review of some results of Shimura and Hida}

Let $\Gamma_1(N)$ be the standard Hecke congruence subgroup of $\SL_2(\mathbb{Z})$; the volume of  
 its fundamental domain $\Phi_1(N)$ with respect to the measure $y^{-2}dxdy$ will be denoted $\mu(\Phi_1(N)).$ 
 Let $M_k(\Gamma_1(N))$ be the space of holomorphic modular forms of weight $k$ for  $\Gamma_1(N)$. 
 Sometimes $M_k(\Gamma_1(N))$ is abbreviated as $M_k(N).$
For $f, \, h \in M_k(\Gamma_1(N))$ such that $fh$ is a cusp form, define the Petersson inner product $\langle f, h \rangle$ by: 
\begin{equation*}
		\langle f, h \rangle \ := \ \mu(\Phi_1(N))^{-1} \int_{\Phi_1(N)}\overline{f(z)}h(z)y^{k-2} \, dx dy .
\end{equation*}

Let $\chi$ be a Dirichlet character modulo $N$. 
Let $M_k(N,\chi)$ (resp., $S_k(N,\chi)$) denote the space of holomorphic modular (resp., cusp) forms of weight $k$ for $\Gamma_0(N)$ 
with nebentypus character $\chi$. 
Assume $l < k$.  
Suppose $f = \sum_{n=1}^\infty a(n,f) q^n \in S_k(N,\chi)$ and $g = \sum_{n=0}^\infty a(n,g) q^n \in M_l(N,\psi),$ where $q = e^{2\pi i z}$, 
then the Rankin--Selberg convolution is defined as the Dirichlet series: 
$$    
D (s, f\times g) \ := \ \sum_{n=1}^\infty a(n,f)a(n, g) n^{-s} .
$$

\smallskip

For an integer $\lambda \geq 0$ and a Dirichlet character $\omega $ modulo $N$ such that $\omega(-1) =(-1)^\lambda $, define an Eisenstein series: 
\begin{align*}
    E^*_{\lambda, N}(z,s,\omega) &= \sum_{\gamma \in \Gamma_\infty \backslash \Gamma_0(N)} 
    \omega(d)(cz +d )^{-\lambda}|cz+d|^{-2s} , \quad (\gamma = \begin{pmatrix} a & b \\ c & d\end{pmatrix}),
\end{align*}
where $\Gamma_\infty = \bigl\{\pm\begin{pmatrix}1 & m \\ 0 & 1 \end{pmatrix} | m \in \mathbb{Z}\bigr\}$. 
For the same $\lambda$ and $\omega,$ define: 
\begin{equation*}
    E_{\lambda, N}(z,s,\omega) = \sum_{(0,0)\neq(m,n) \in \mathbb{Z}^2} \omega(n)(mNz+n)^{-\lambda}|mNz + n|^{-2s}.
\end{equation*}
The following relation holds between $E_{\lambda, N}^*$ and $E_{\lambda, N}$: 
\begin{equation*}
    E_{\lambda,N}(z,s,\omega) = 2 L_N(2s+\lambda,\omega)E^*_{\lambda,N}(z,s,\omega),
\end{equation*}
where $L_N(s,\omega)$ denotes the Dirichlet $L$-series $\sum_{n=1}^\infty \omega(n)/n^s$ and the subscript $N$ means that 
$\omega(n) = 0$ if ${\rm gcd}(n,N) \neq 1.$ If $\omega$ is primitive modulo $N$ then this condition is automatic and the subscript may be dropped. 
One has the following integral representation: 
\begin{multline*}
    2(4\pi)^{-s}\Gamma(s)L_N(2s+2-k-l,\chi\psi) D(s,f\times g) \ = \\ 
    \int_{\Phi_0(N)} \bar{f_\rho}g \cdot E_{k-l,N}(z,s+1-k,\chi\psi)\, y^{s-1}\, dxdy. 
\end{multline*}
Here $\Phi_0(N)$ is the fundamental domain for $\Gamma_0(N) \backslash \mathbb{H}$ and $f_\rho(z) = \overline{f(-\overline{z})} = \sum_{n=1}^\infty \overline{a_n}e(nz)$. Define $E_{\lambda, N}^*(z,\omega) = E_{\lambda, N}^*(z,0,\omega)$.

\smallskip

The Maa\ss--Shimura differential operators on the space of smooth functions on the Poincar\'e upper half plane $\mathbb{H}$ are defined by:
$$
\delta_\lambda  = \frac{1}{2\pi i} \left( \frac{\lambda}{2iy} + \frac{\partial}{\partial z}\right), \ \ 0 < \lambda \in \mathbb{Z}, \quad {\rm and} \quad 
\delta_{\lambda}^{(r)} = \delta_{\lambda+2r-2}\cdots \delta_{\lambda +2}\delta_{\lambda}, \ \ 0\leq r \in \mathbb{Z}. 
$$
It is understood that $\delta_{\lambda}^{(0)}$ is the identity operator. Define $d = \frac{1}{2 \pi i}\frac{\partial}{\partial z} = q \frac{d}{dq}.$ Then 
\begin{align}
\delta_{\lambda}^{(r)} \ = \ \sum_{0\leq t \leq r} \binom{r}{t} \frac{\Gamma(\lambda + r)}{\Gamma(\lambda +t)} (-4\pi y)^{t-r} d^t, 
\label{eqn:delta_lambda-identity}
\end{align}
where the symbol $d^{0}$ means the identity operator; see \cite{shimura}. The action of these operators on $1/4\pi y$ is given by the following

\begin{lemma}
$$
d^n\left(\frac{1}{4 \pi y}\right) = \frac{n!}{(4 \pi y)^{n+1}}, \quad n=0,1,2,\dots, 
$$
$$
\delta_{\lambda}^{(r)} \left( \frac{1}{4 \pi y} \right) = 
(-4\pi y)^{-(r+1)}\cdot \sum_{t=0}^{r} \binom{r}{t}(-1)^{t+1} \frac{\Gamma(\lambda+r)\Gamma(t+1)}{\Gamma(\lambda + t)}. 
$$
\label{lemma: action-of-d-on-1-over-4piy}
\end{lemma}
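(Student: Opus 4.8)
The plan is to establish the first identity by a direct induction (or equivalently by iterating a one-step computation) and then to deduce the second by substituting both the first identity and the expansion \eqref{eqn:delta_lambda-identity} term by term.

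For the first identity, the only analytic input I need is the Wirtinger derivative $\partial y/\partial z = 1/(2i)$, which follows from writing $y = (z-\bar z)/(2i)$ and treating $z,\bar z$ as independent so that $\partial\bar z/\partial z = 0$. Granting this, for any integer $k \geq 1$ I would compute
\begin{equation*}
d\!\left(\frac{1}{(4\pi y)^{k}}\right) = \frac{1}{2\pi i}\,\frac{\partial}{\partial z}\!\left(\frac{1}{(4\pi y)^{k}}\right) = \frac{1}{(4\pi)^{k}}\cdot\frac{1}{2\pi i}\cdot\bigl(-k\,y^{-k-1}\bigr)\cdot\frac{1}{2i} = \frac{k}{(4\pi y)^{k+1}},
\end{equation*}
where $i^{2}=-1$ clears the sign. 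Thus each application of $d$ to $(4\pi y)^{-k}$ multiplies by the current exponent $k$ and raises the power by one. Starting from $k=1$ and iterating $n$ times therefore produces the factor $1\cdot 2\cdots n = n!$ together with the power $(4\pi y)^{-(n+1)}$, which is exactly $d^{n}(1/4\pi y) = n!/(4\pi y)^{n+1}$; the case $n=0$ is the identity operator and is trivial.

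For the second identity I would apply the operator identity \eqref{eqn:delta_lambda-identity} to the function $1/4\pi y$ and insert the first identity into each summand, using $t! = \Gamma(t+1)$:
\begin{equation*}
\delta_{\lambda}^{(r)}\!\left(\frac{1}{4\pi y}\right) = \sum_{t=0}^{r}\binom{r}{t}\frac{\Gamma(\lambda+r)}{\Gamma(\lambda+t)}\,(-4\pi y)^{t-r}\,\frac{\Gamma(t+1)}{(4\pi y)^{t+1}}.
\end{equation*}
Collecting powers of $4\pi y$ is then the only remaining step: writing $(-4\pi y)^{t-r} = (-1)^{t-r}(4\pi y)^{t-r}$ and multiplying by $(4\pi y)^{-(t+1)}$ produces the common factor $(4\pi y)^{-(r+1)}$ in every term, independent of $t$. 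Pulling out the normalization as $(4\pi y)^{-(r+1)} = (-1)^{r+1}(-4\pi y)^{-(r+1)}$ and combining the residual sign $(-1)^{r+1}$ with the summand sign $(-1)^{t-r}$ gives $(-1)^{(r+1)+(t-r)} = (-1)^{t+1}$, which is precisely the sign appearing in the claimed formula.

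I do not expect any serious obstacle; the entire argument is a bookkeeping computation resting on the single derivative $\partial y/\partial z = 1/(2i)$. The one place where an error could creep in is the sign accounting in the last step, where several sources of minus signs must be kept straight: the $i^{2}=-1$ from the holomorphic derivative, the base $-4\pi y$ occurring inside \eqref{eqn:delta_lambda-identity}, and the conversion between $(4\pi y)^{-(r+1)}$ and $(-4\pi y)^{-(r+1)}$ needed to match the normalization on the right-hand side. Carrying the exponents symbolically, rather than verifying small cases, is the safest way to avoid these pitfalls.
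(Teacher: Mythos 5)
Your proposal is correct and follows exactly the route the paper indicates: the paper's proof is the one-line remark that the first identity follows by induction on $n$ and the second by combining the first with \eqref{eqn:delta_lambda-identity}, which is precisely what you carry out (and your sign bookkeeping, including the conversion $(4\pi y)^{-(r+1)} = (-1)^{r+1}(-4\pi y)^{-(r+1)}$ yielding the factor $(-1)^{t+1}$, checks out). Nothing further is needed.
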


\begin{proof}
The first is proved by induction on $n$; the second from the first and \eqref{eqn:delta_lambda-identity}. 
\end{proof}

Let $A_r$ denote the set of all functions of the form $h(z)= \sum_{\nu = 0}^r y^{-\nu} g_\nu(z)$ with holomorphic functions $g_\nu$ on $\mathbb{H}$ which have 
Fourier expansions $g_\nu(z) = \sum_{n=0}^\infty b_{\nu n}q^n$. Elements of $A_r$ may be called {\it nearly holomorphic modular forms.} 
The following lemma is \cite[Lem.\,7]{shimura}.

\begin{lemma}\label{lemma: hol-projection}
		For a positive integer $k>2r$ and a Dirichlet $\chi$ modulo $N$, suppose that an element $h$ of $A_r$ satisfies
\begin{equation}
		\begin{split}
		h(\gamma(z))(cz+d)^{-k} \in A_r\:\: \mbox{for all}\:\: \gamma =
		\begin{pmatrix}
				a & b \\ c & d
		\end{pmatrix} \in SL_2(\mathbb{Z})\\
		h(\gamma(z))(cz+d)^{-k} = \chi(d) h(z)\:\: \mbox{for all}\:\: \gamma = 
		\begin{pmatrix}
				a & b \\ c & d	
		\end{pmatrix} \in \Gamma_0(N). 
\end{split}
\end{equation}
Then $h(z) = \sum_{\nu = 0}^r \delta_{k-2\nu}^{(\nu)}h_\nu,$ with $h_\nu$ of $M_{k-2\nu}(N,\chi)$ being uniquley determined by $h$.
\end{lemma}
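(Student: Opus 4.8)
The plan is to prove the statement by induction on the depth $r$, peeling off the top-degree term at each stage. The engine driving the induction is a depth-lowering operator together with the weight-raising property of the $\delta$'s. Concretely, set $\varepsilon := y^{2}\,\partial/\partial\bar z$. Since each $g_\nu$ is holomorphic and $\partial y/\partial\bar z = i/2$, a direct computation gives $\varepsilon(y^{-\nu}g_\nu) = -\tfrac{i\nu}{2}\,y^{-(\nu-1)}g_\nu$, so $\varepsilon$ lowers the depth by one and annihilates the depth-zero term. Moreover $\varepsilon$ is, up to the harmless constant $-2i$, the classical Maass lowering operator, hence it is equivariant for the weight-$k$ slash action in the sense that it sends a function transforming with weight $k$ and character $\chi$ to one transforming with weight $k-2$ and the same character; this is the only external input and may be quoted from Shimura. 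Iterating, $\varepsilon^{r}h = (-\tfrac{i}{2})^{r}r!\,g_r$, which is holomorphic, transforms with weight $k-2r$ and character $\chi$ under $\Gamma_0(N)$, and---because the hypothesis $h|_k\gamma\in A_r$ holds for every $\gamma\in\SL_2(\Z)$---is holomorphic at every cusp; thus $g_r\in M_{k-2r}(N,\chi)$.

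With $g_r$ in hand, I would use the explicit formula \eqref{eqn:delta_lambda-identity} to match the leading term. Its $t=0$ summand shows that the coefficient of $y^{-r}$ in $\delta_{k-2r}^{(r)}h_r$ equals $(-4\pi)^{-r}\,\dfrac{\Gamma(k-r)}{\Gamma(k-2r)}\,h_r$. Because $k>2r$ forces $k-2r\ge 1$, the factor $\Gamma(k-r)/\Gamma(k-2r)=(k-2r)(k-2r+1)\cdots(k-r-1)$ is a nonzero integer, so one may solve $h_r = (-4\pi)^{r}\,\dfrac{\Gamma(k-2r)}{\Gamma(k-r)}\,g_r\in M_{k-2r}(N,\chi)$ and arrange that $h-\delta_{k-2r}^{(r)}h_r$ has depth at most $r-1$. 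Since $\delta_{k-2r}^{(r)}h_r$ again transforms with weight $k$ and character $\chi$ by the weight-raising equivariance of $\delta$, and lies in the nearly holomorphic forms, the difference $h-\delta_{k-2r}^{(r)}h_r$ lies in $A_{r-1}$ and satisfies both hypotheses with the same $k>2r>2(r-1)$. The induction hypothesis then expresses it as $\sum_{\nu=0}^{r-1}\delta_{k-2\nu}^{(\nu)}h_\nu$, and adding back the term $\delta_{k-2r}^{(r)}h_r$ yields the desired decomposition. The base case $r=0$ is immediate: $A_0$ consists of holomorphic functions, so $h=h_0\in M_k(N,\chi)$ and $\delta_k^{(0)}h_0=h_0$.

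For uniqueness I would again apply $\varepsilon^{r}$: it annihilates every summand of depth $<r$, and on $\delta_{k-2r}^{(r)}h_r$ it returns the nonzero multiple $(-\tfrac{i}{2})^{r}r!\,(-4\pi)^{-r}\tfrac{\Gamma(k-r)}{\Gamma(k-2r)}h_r$ of $h_r$. Hence $\sum_{\nu=0}^{r}\delta_{k-2\nu}^{(\nu)}h_\nu=0$ forces $h_r=0$, and descending induction gives $h_\nu=0$ for all $\nu$; equivalently, each $h_\nu$ is recovered from $h$ by an explicit lowering-then-subtracting procedure. The step I expect to require the most care is the cusp-holomorphy claim for $g_r$: one must check that applying $\varepsilon^{r}$ commutes with the slash action at every $\gamma\in\SL_2(\Z)$, so that $g_r|_{k-2r}\gamma$ inherits a Fourier expansion supported in nonnegative powers of $q$ from the $A_r$-expansion of $h|_k\gamma$. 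This is precisely where the hypothesis is imposed over all of $\SL_2(\Z)$ rather than merely over $\Gamma_0(N)$, and it is what guarantees that the extracted forms are genuinely holomorphic rather than only weakly holomorphic.
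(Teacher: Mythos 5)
Your proof is correct. The paper itself offers no proof of this lemma --- it is quoted directly as Lemma 7 of Shimura \cite{shimura} --- and your argument is essentially the standard one found there: induction on the depth $r$, extraction of the top coefficient $g_r$ as an element of $M_{k-2r}(N,\chi)$ (your lowering operator $\varepsilon = y^2\partial/\partial\bar z$ makes the equivariance transparent; Shimura instead compares top powers of $y^{-1}$ in the transformation law, but this is the same mechanism), normalization of $h_r$ against the $t=0$ term of \eqref{eqn:delta_lambda-identity} using $k>2r$ to ensure $\Gamma(k-r)/\Gamma(k-2r)\neq 0$, subtraction of $\delta_{k-2r}^{(r)}h_r$, and uniqueness by the same triangularity --- including the correct observation that the hypothesis over all of $\SL_2(\Z)$, together with the commutation of $\varepsilon$ with the slash action, is exactly what yields holomorphy of the $h_\nu$ at every cusp.
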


The following orthogonality relation is \cite[Lem.\,6]{shimura}. 

\begin{lemma}\label{lemma: zero-lemma}
	Suppose $f\in S_k(N,\chi)$, $g \in M_l(N,\bar{\chi}),$ and $k=l+2r$ with a positive integer $r$. Then $\langle f_\rho, \delta_l^{(r)}g \rangle=0 $.
\end{lemma}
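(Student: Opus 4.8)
The plan is to reduce the Petersson integral to the integral of a $\partial/\partial z$-derivative over the fundamental domain and then kill it by Stokes' theorem, exploiting that $f_\rho$ is holomorphic and cuspidal. First I would peel off the outermost differential operator, writing $\delta_l^{(r)}g = \delta_{k-2}H$ with $H := \delta_l^{(r-1)}g$; since $r\geq 1$ this makes sense, and $H$ is a nearly holomorphic form of weight $l+2(r-1)=k-2$ and nebentypus $\bar\chi$ obtained by applying the Maa\ss--Shimura operators to $g\in M_l(N,\bar\chi)$. The one computation I would record is the weight-raising identity
\[
\delta_{k-2}H \;=\; \frac{1}{2\pi i}\, y^{-(k-2)}\frac{\partial}{\partial z}\bigl(y^{k-2}H\bigr),
\]
which follows from $\tfrac{\partial}{\partial z}y=\tfrac{1}{2i}$ and the definition of $\delta_\lambda$.

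Next I would substitute this into the Petersson integral. Writing out $\langle f_\rho,\delta_l^{(r)}g\rangle$ and using that $\overline{f_\rho}$ is anti-holomorphic (so that $\tfrac{\partial}{\partial z}\overline{f_\rho}=0$), the factors $y^{\pm(k-2)}$ cancel and the integrand collapses to a total derivative:
\[
\overline{f_\rho}\cdot\delta_l^{(r)}g\cdot y^{k-2}
\;=\;\frac{1}{2\pi i}\,\frac{\partial}{\partial z}\bigl(\overline{f_\rho}\,y^{k-2}H\bigr).
\]
I would then interpret this as $d$ of a $1$-form: with $\Psi:=\overline{f_\rho}\,y^{k-2}H$ and $\omega:=\Psi\,d\bar z$ one has $d\omega=\tfrac{\partial\Psi}{\partial z}\,dz\wedge d\bar z$, so (up to the constant $\mu(\Phi_1(N))^{-1}(2\pi i)^{-1}$ and the identity $dx\,dy=\tfrac{i}{2}\,dz\wedge d\bar z$) the inner product equals $\int_{\Phi_1(N)}d\omega$.

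The decisive point is that $\omega$ is $\Gamma_1(N)$-invariant. Checking the transformation laws, $\overline{f_\rho(\gamma z)}=\chi(d)\,\overline{(cz+d)}^{\,k}\,\overline{f_\rho(z)}$, $H(\gamma z)=\bar\chi(d)(cz+d)^{k-2}H(z)$ and $y(\gamma z)=y/|cz+d|^2$ combine (using $\chi\bar\chi=1$) to give $\Psi(\gamma z)=\overline{(cz+d)}^{\,2}\Psi(z)$, while $d\overline{\gamma z}=\overline{(cz+d)}^{-2}\,d\bar z$; hence $\omega$ is invariant. By Stokes' theorem $\int_{\Phi_1(N)}d\omega$ reduces to a boundary integral, and the sides of $\partial\Phi_1(N)$ identified in pairs by $\Gamma_1(N)$ cancel, since $\omega$ is invariant and the identifications reverse orientation. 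The only remaining contributions come from the cusps; here I would truncate the fundamental domain at a large height and use that $f_\rho$, being a cusp form, decays exponentially at every cusp while $y^{k-2}H$ grows at most polynomially, so $\Psi\to0$ and the truncation boundary integrals vanish in the limit.

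The main obstacle I anticipate is precisely this boundary analysis: making the Stokes argument rigorous on the non-compact $\Gamma_1(N)\backslash\mathbb{H}$ requires care with the truncation near each cusp, with the cancellation over paired sides (the elliptic fixed points and corners being negligible, as the integrand is smooth there), and with the convergence of the original integral. Once the invariance of $\omega$ and the cuspidal decay of $f_\rho$ are in hand, everything else is bookkeeping, and the hypothesis that $r$ be a positive integer is exactly what allows the outermost $\delta$ to be peeled off.
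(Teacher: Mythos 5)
Your proof is correct. The paper itself gives no argument for this lemma---it simply cites Shimura \cite[Lem.\,6]{shimura}---and your derivation (peeling off the outermost operator to write $\delta_l^{(r)}g=\delta_{k-2}H$, recognizing $\overline{f_\rho}\,\delta_{k-2}H\,y^{k-2}=\tfrac{1}{2\pi i}\,\tfrac{\partial}{\partial z}\bigl(\overline{f_\rho}\,y^{k-2}H\bigr)$ via the weight-raising identity, checking $\Gamma$-invariance of the resulting $1$-form, and killing the integral by Stokes' theorem using the exponential decay of the cusp form $f_\rho$ against the polynomial growth of the nearly holomorphic factor) is precisely the standard proof of that cited lemma, so you have in essence reconstructed the argument the paper relies on.
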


As a consequence of the above two lemmas one has:

\begin{lemma}\label{lemma: fgdelta_fh0}
$\langle f_\rho, g \cdot \delta_{\lambda}^{(r)}E_{\lambda, N}\rangle \ = \ \langle f_\rho, h_0 \rangle,$ where 
$h_0$ is the holomorphic projection of the nearly holomorphic modular form $g \cdot \delta_\lambda^{(r)} E_{\lambda, N}.$
\end{lemma}

Combining all of the observations from before we get

\begin{theorem} \label{thm: l-value-and-petersson-inner-product}
Suppose $f \in S_k(N,\chi),$ $g \in M_l(N,\psi)$, and $l+2r < k$ with a non-negative integer $r$. Then
$$
	D(k-1-r,f\times g) \ = \ c_r \pi^k \langle f_\rho , \ g.\delta_{\lambda}^{(r)}E_{\lambda,N}^*(z,\chi\psi) \rangle = \langle f_\rho , h_0 \rangle, 
$$
where $\lambda = k - l -2r$, and $c_r$ is the rational number explicitly given by: 
$$
    c_r \ = \ \frac{\Gamma(k-l-2r)}{\Gamma(k-1-r)\Gamma(k-l-r)}.\frac{(-1)^r4^{k-1}N}{3} \prod_{p | N} (1 + p^{-1}), 
$$
where the product is taken over all prime factors $p$ of $N$.
\end{theorem}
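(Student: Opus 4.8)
The plan is to evaluate the integral representation displayed above at the single critical argument $s = k-1-r$ and to convert the resulting special value of the real-analytic Eisenstein series into a holomorphic Eisenstein series acted on by the Maass--Shimura operators. First I would record the specializations at $s = k-1-r$: the Dirichlet factor becomes $L_N(2s+2-k-l,\chi\psi) = L_N(\lambda,\chi\psi)$ with $\lambda = k-l-2r$, the Eisenstein argument becomes $s+1-k = -r$, and the weight factor becomes $y^{s-1} = y^{k-2-r}$. Thus the right-hand side of the integral representation is $\int_{\Phi_0(N)} \overline{f_\rho}\, g\, E_{k-l,N}(z,-r,\chi\psi)\, y^{k-2-r}\,dx\,dy$, which I rewrite as $\int_{\Phi_0(N)} \overline{f_\rho}\, g\,\bigl[y^{-r}E_{k-l,N}(z,-r,\chi\psi)\bigr] y^{k-2}\,dx\,dy$ so that the measure $y^{k-2}\,dx\,dy$ matches the Petersson product in weight $k$.

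The heart of the argument is a differential-operator identity for the Eisenstein series. The combination $G_{\lambda,s}(z) := y^s E_{\lambda,N}(z,s,\omega)$ transforms as a weight-$\lambda$ (nearly holomorphic) form, so $\delta_\lambda$ sends it to a weight-$(\lambda+2)$ form. A direct computation, writing each summand as $(mNz+n)^{-\lambda-s}(mN\bar z+n)^{-s}$ and using $mN = (w-\bar w)/2iy$, shows that the ``diagonal'' contribution cancels and one is left with the clean relation
\[
\delta_\lambda G_{\lambda,s} = -\tfrac{\lambda+s}{4\pi}\,G_{\lambda+2,\,s-1}.
\]
Iterating over the chain $\delta_{\lambda+2r-2}\cdots\delta_{\lambda+2}\delta_\lambda$ and setting $s=0$ yields
\[
\delta_\lambda^{(r)}E_{\lambda,N}(z,0,\omega) = \Bigl(\tfrac{-1}{4\pi}\Bigr)^{r}\frac{\Gamma(\lambda+r)}{\Gamma(\lambda)}\;y^{-r}E_{k-l,N}(z,-r,\omega),
\]
since $\lambda+2r=k-l$. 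Solving for $y^{-r}E_{k-l,N}(z,-r,\omega)$ and invoking the normalization $E_{\lambda,N}(z,0,\omega)=2L_N(\lambda,\omega)E^*_{\lambda,N}(z,\omega)$ (the scalar $L_N(\lambda,\omega)$ being constant in $z$, hence commuting past $\delta_\lambda^{(r)}$), I obtain with $\omega=\chi\psi$ a proportionality between $y^{-r}E_{k-l,N}(z,-r,\chi\psi)$ and $\delta_\lambda^{(r)}E^*_{\lambda,N}(z,\chi\psi)$ whose constant carries the factor $2L_N(\lambda,\chi\psi)$; this factor cancels against the $L_N(\lambda,\chi\psi)$ on the left-hand side of the integral representation.

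It then remains to recognize the integral as a Petersson product and to collect constants. The integral over $\Phi_0(N)$ equals $\mu(\Phi_0(N))\langle f_\rho, g\,\delta_\lambda^{(r)}E^*_{\lambda,N}(z,\chi\psi)\rangle$, where $\mu(\Phi_0(N)) = \tfrac{\pi}{3}N\prod_{p\mid N}(1+p^{-1})$ by the index formula $[\SL_2(\Z):\Gamma_0(N)] = N\prod_{p\mid N}(1+p^{-1})$. Combining the surviving power $(4\pi)^{k-1}$ arising from $(4\pi)^{k-1-r}\cdot(4\pi)^{r}$, the sign $(-1)^r$, the Gamma quotient $\Gamma(k-l-2r)/\bigl(\Gamma(k-1-r)\Gamma(k-l-r)\bigr)$, and the volume factor, and noting $(4\pi)^{k-1}\cdot\pi = 4^{k-1}\pi^{k}$, reproduces exactly the stated $\pi^k c_r$. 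That the nearly holomorphic form $g\,\delta_\lambda^{(r)}E^*_{\lambda,N}$ may be paired against the cusp form $f_\rho$ this way---and that only its holomorphic projection contributes---is precisely what Lemma~\ref{lemma: hol-projection}, Lemma~\ref{lemma: zero-lemma}, and their consequence Lemma~\ref{lemma: fgdelta_fh0} guarantee.

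The main obstacle is the differential-operator step: establishing $\delta_\lambda G_{\lambda,s} = -\tfrac{\lambda+s}{4\pi}G_{\lambda+2,s-1}$ requires care because $E_{\lambda,N}(z,s)$ is a priori defined only for $\Re(s)$ large, whereas the point $s=-r$ lies in the range of analytic continuation; one must check that the operator identity, valid in the region of convergence, persists under continuation, and that the diagonal term genuinely cancels rather than leaving a spurious lower-weight Eisenstein series. The remaining work---the specialization of arguments and the bookkeeping of the factors $4$, $\pi$, and the Gamma quotients---is routine once this identity and the normalization $E_{\lambda,N}=2L_N E^*_{\lambda,N}$ are in hand.
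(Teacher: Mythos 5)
Your proposal is correct, and it is essentially the argument of Shimura's original paper: the paper itself gives no proof of this theorem (it is quoted directly from \cite{shimura}), and your reconstruction --- specializing the unfolded integral representation at $s=k-1-r$, the termwise identity $\delta_\lambda\bigl(y^sE_{\lambda,N}(z,s,\omega)\bigr) = -\frac{\lambda+s}{4\pi}\,y^{s-1}E_{\lambda+2,N}(z,s-1,\omega)$ iterated to relate $y^{-r}E_{k-l,N}(z,-r,\chi\psi)$ to $\delta_\lambda^{(r)}E^*_{\lambda,N}$, cancelling $2L_N(\lambda,\chi\psi)$, and inserting $\mu(\Phi_0(N))=\frac{\pi}{3}N\prod_{p\mid N}(1+p^{-1})$ --- is exactly Shimura's route, with all the constants checking out ($(-4\pi)^r(4\pi)^{k-1-r}\pi = (-1)^r4^{k-1}\pi^k$ and $\Gamma(\lambda)/\Gamma(\lambda+r)=\Gamma(k-l-2r)/\Gamma(k-l-r)$). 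Your caveat about justifying the operator identity at $s=-r$ via analytic continuation (Hecke's trick, needed precisely when $\lambda\le 2$, consistent with the paper's later treatment of the nonholomorphic $E^*_{2,N}$) is the right one and is handled the same way in the source.
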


\smallskip

When $\lambda =2$ and $\omega = 1\!\!1$ is the trivial character modulo $N$, then $E_{\lambda,N}^*(z,\omega)$ is not holomorphic at $\infty$; 
its Fourier expansion at $\infty$ is given by 
$$
E_{2, N}^*(z, 1\!\!1) \ = \ \frac{c}{4\pi y} + \sum_{n=0}^\infty c_n q^n, 
$$ 
where $c$ and all the $c_n$ are in $\Q$. This is due to Hecke; see, for example, Miyake \cite[p.\,288]{miyake}. 

\smallskip

Let $f \in S_k(N,\chi)$ and $g \in M_k(N,\psi)$. Then as noted above
$$
E_{\lambda, N}^*(z, \chi\psi) \ = \ \frac{c}{4\pi y}+ \sum_{n=0}^\infty c_n q^n \ =: \ \frac{c}{4\pi y} + E, 
$$ 
with $c \neq 0$ only when $\lambda = 2$ and $\chi\psi = 1\!\!1$. Then, by Lemma \ref{lemma: action-of-d-on-1-over-4piy} and Lemma \ref{lemma: hol-projection} 
\begin{multline}
g.\delta_\lambda^{(r)}E_{\lambda, N}^*= g\cdot \delta_\lambda^{(r)}\left(\frac{c}{4\pi y} +  E\right) \\ 
= g\cdot\frac{c\cdot c'}{(-4 \pi y)^{r+1}} + g\cdot \delta_\lambda^{(r)}(E) = 
\begin{cases} \sum_{\nu = 0}^{r+1} \delta_{k-2\nu}^{(\nu)}h_\nu & \mbox{if $\chi \psi = 1\!\!1, \lambda = 2$,} \\
\sum_{\nu = 0}^r \delta_{k-2\nu}^{(\nu)}h_\nu  & \text{otherwise},
\end{cases} \label{eqn:relations-hi}
\end{multline}
where $h_\nu \in M_{k-2\nu}(N, \chi\cdot \psi)$ and $c' \in \mathbb{Q}$ is obtained from Lemma \ref{lemma: action-of-d-on-1-over-4piy}.

\begin{lemma}\label{lemma:h-recursive-relations}
The functions $h_\nu$, as in \eqref{eqn:relations-hi}, satisfy the following recursive relations: 
\begin{enumerate}
\item[(i)] When $\lambda = 2$ and $\chi\psi = 1\!\!1$ then 
\begin{equation*}
\frac{\Gamma(k- (r+1))}{\Gamma(k - 2(r+1))}h_{r+1} = cg\sum_{t=0}^{r} \binom{r}{t}(-1)^{t+1} \frac{\Gamma(\lambda+r)\Gamma(t+1)}{\Gamma(\lambda + t)}
\end{equation*}
and
\begin{multline*}
\frac{\Gamma(k-\nu)}{\Gamma(k-2\nu)} h_{\nu} = 
\binom{r}{r-\nu}\frac{\Gamma(\lambda+r)}{\Gamma(\lambda+r-\nu)}g d^{r-\nu} E_{\lambda, N}^*(z,\chi\psi) \\ 
- \sum_{n=\nu +1}^{r+1}\binom{n}{n-\nu} \frac{\Gamma(k-n)}{\Gamma(k-n-\nu)}d^{n-j} h_n \qquad \text{for $\nu=0,1,\dots, r$}. 
\end{multline*} 

\item[(ii)] When $\lambda \neq 2$ or $\chi \psi \neq 1\!\!1$ then
\begin{multline*}
\frac{\Gamma(k-\nu)}{\Gamma(k-2\nu)} h_{\nu} = 
\binom{r}{r-\nu}\frac{\Gamma(\lambda+r)}{\Gamma(\lambda+r-\nu)}g d^{r-\nu} E_{\lambda, N}^*(z,\chi\psi) \\ 
- \sum_{n=\nu+1}^{r}\binom{n}{n-\nu} \frac{\Gamma(k-n)}{\Gamma(k-n-\nu)}d^{n-j} h_n \qquad \text{for $\nu=0,1,\dots, r$}.
\end{multline*}
\end{enumerate}
\end{lemma}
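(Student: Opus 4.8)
The plan is to read off both families of recursions by expanding each side of \eqref{eqn:relations-hi} as a polynomial in $(-4\pi y)^{-1}$ whose coefficients are holomorphic, and then comparing coefficients of equal powers of $(-4\pi y)^{-1}$. This comparison is legitimate because of the uniqueness built into the nearly holomorphic decomposition: by the very definition of $A_r$ the functions $g_\nu$ in an expression $\sum_\nu y^{-\nu}g_\nu$ are determined by the sum, and Lemma~\ref{lemma: hol-projection} packages this as the unique writing of the left side of \eqref{eqn:relations-hi} in the form $\sum_\nu \delta_{k-2\nu}^{(\nu)}h_\nu$. The explicit engine for the expansion is the closed form \eqref{eqn:delta_lambda-identity} for $\delta_\lambda^{(r)}$, together with Lemma~\ref{lemma: action-of-d-on-1-over-4piy} to treat the anomalous $1/4\pi y$ term.

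First I would expand the right-hand side of \eqref{eqn:relations-hi}. Specialising \eqref{eqn:delta_lambda-identity} to $\delta_{k-2\nu}^{(\nu)}$ and writing $j$ for the exponent of $(-4\pi y)^{-1}$ gives
\[
\delta_{k-2\nu}^{(\nu)}h_\nu \ = \ \sum_{j=0}^{\nu}\binom{\nu}{\nu-j}\frac{\Gamma(k-\nu)}{\Gamma(k-\nu-j)}(-4\pi y)^{-j}\,d^{\nu-j}h_\nu ,
\]
so that after summing over $\nu$ and interchanging the two summations the coefficient of $(-4\pi y)^{-j}$ becomes $\sum_{\nu\ge j}\binom{\nu}{\nu-j}\tfrac{\Gamma(k-\nu)}{\Gamma(k-\nu-j)}d^{\nu-j}h_\nu$. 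The decisive structural point is that this system is triangular: the term $\nu=j$ contributes exactly $\tfrac{\Gamma(k-j)}{\Gamma(k-2j)}h_j$ (since $d^0h_j=h_j$), while every $\nu>j$ contributes an $h_\nu$ of strictly higher index hit by a positive power of $d$. Thus the diagonal factor $\Gamma(k-\nu)/\Gamma(k-2\nu)$ appearing on the left of both recursions is forced, and the $h_\nu$ can be solved for from the top index downwards.

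Next I would expand the left-hand side $g\cdot \delta_\lambda^{(r)}E_{\lambda,N}^*$. In case (ii) the form $E_{\lambda,N}^*$ is holomorphic, so \eqref{eqn:delta_lambda-identity} shows that the coefficient of $(-4\pi y)^{-j}$ is $\binom{r}{r-j}\tfrac{\Gamma(\lambda+r)}{\Gamma(\lambda+r-j)}g\,d^{r-j}E_{\lambda,N}^*$, the top power being $j=r$; equating coefficients and isolating the diagonal term $\tfrac{\Gamma(k-j)}{\Gamma(k-2j)}h_j$ yields recursion (ii) after relabelling. Case (i) is the same computation with one extra ingredient. Splitting $E_{\lambda,N}^* = \tfrac{c}{4\pi y}+E$ with $E$ holomorphic, Lemma~\ref{lemma: action-of-d-on-1-over-4piy} shows that $\delta_\lambda^{(r)}(\tfrac{c}{4\pi y})$ is a single multiple of $(-4\pi y)^{-(r+1)}$, equal to $c(-4\pi y)^{-(r+1)}\sum_{t=0}^{r}\binom{r}{t}(-1)^{t+1}\tfrac{\Gamma(\lambda+r)\Gamma(t+1)}{\Gamma(\lambda+t)}$. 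Hence the top power on the left is raised to $r+1$ and receives a contribution from this piece alone; matching the coefficient of $(-4\pi y)^{-(r+1)}$ (the right side contributing only $\tfrac{\Gamma(k-(r+1))}{\Gamma(k-2(r+1))}h_{r+1}$) produces the closed formula for $h_{r+1}$. For $j\le r$ only the holomorphic part $E$ feeds the coefficient, so the same recursion reappears, now with the inner sum running up to $n=r+1$.

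I expect the main obstacle to be organisational rather than conceptual: one must carry three families of weights consistently through two changes of index — the Eisenstein-side factors $\binom{r}{r-j}\Gamma(\lambda+r)/\Gamma(\lambda+r-j)$, the operator-side factors $\binom{\nu}{\nu-j}\Gamma(k-\nu)/\Gamma(k-\nu-j)$, and the coefficients coming from Lemma~\ref{lemma: action-of-d-on-1-over-4piy} — while keeping the $(-4\pi y)$ versus $(4\pi y)$ signs straight. The one genuinely delicate verification is that $d^t(c/4\pi y)$ lands in the single top order $(-4\pi y)^{-(r+1)}$ for every $t$; this is precisely what decouples the formula for $h_{r+1}$ from the remaining recursion, and it is exactly the content of Lemma~\ref{lemma: action-of-d-on-1-over-4piy}.
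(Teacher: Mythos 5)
Your proposal is correct and follows essentially the same route as the paper: expand both sides of \eqref{eqn:relations-hi} via the closed form \eqref{eqn:delta_lambda-identity}, isolate the $c/4\pi y$ contribution through Lemma~\ref{lemma: action-of-d-on-1-over-4piy} (which places it entirely at order $(-4\pi y)^{-(r+1)}$, yielding the closed formula for $h_{r+1}$), and compare coefficients of powers of $(-4\pi y)$, using the uniqueness in Lemma~\ref{lemma: hol-projection} to justify the comparison. The only difference is cosmetic: the paper cites Hida for case (ii) and carries out this computation only for case (i), whereas your uniform triangular-system argument proves both cases at once (and, incidentally, makes clear that the exponent $d^{n-j}$ in the paper's statement should read $d^{n-\nu}$).
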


\begin{proof}
Statement $(ii)$ is proved in Hida \cite[p.177]{hida}. For $(i)$, when $\lambda = 2$ and $\chi\psi = \mbox{trivial}$, 
\begin{equation*}
  g.\delta_\lambda^{(r)}E_{\lambda, N}^*= g\cdot \delta_\lambda^{(r)}\left(\frac{c}{4\pi y} +  E\right)= g\cdot \delta_\lambda^{(r)}\left(\frac{c}{4\pi y}\right) + g\cdot\delta_\lambda^{(r)}\left( E\right)= \sum_{n = 0}^{r+1} \delta_{k - 2n}^{(n)} h_{n}.
\end{equation*}
Use \eqref{eqn:delta_lambda-identity} and expand both sides of the above equation to get:
\begin{equation*}
g \cdot \sum_{t = 0}^{r} \binom{r}{t} \frac{\Gamma(\lambda + r)}{\Gamma(\lambda+t)} (-4\pi y )^{t - r} d^t\left(\frac{c}{4 \pi y} + E\right) 
    \\= \sum_{n = 0}^{r+1}\sum_{t = 0}^{n} \binom{n}{t} \frac{\Gamma(k-n)}{\Gamma(k-2n+t)}(-4 \pi y)^{t-n} (d^t h_n).
\end{equation*}
The left hand side of the equation becomes
\begin{equation*}
c g\cdot(-4\pi y)^{-(r+1)}\cdot \sum_{t=0}^{r} \binom{r}{t}(-1)^{t+1} \frac{\Gamma(\lambda+r)\Gamma(t+1)}{\Gamma(\lambda + t)} + g\cdot \delta_{\lambda}^{(r)}E.
\end{equation*}
By comparing the coefficients of $(-4\pi y)^{t}$ for $t=0,1,\dots, r+1,$ one gets the desired equality.
\end{proof}

\bigskip
For a nomalized eigenform $f \in S_k(N,\chi)$, and a modular form $g \in M_l(N,\psi)$, define the completed
Rankin--Selberg $L$-function (with a `classical' normalization) as: 
\begin{equation}\label{eqn:abelian-infinity}
	L(s,f\times g) = L_\infty(s, f\times g) L_N(2s+2-k-l,\chi \psi) D(s, f \times g),
\end{equation} 
where $L_\infty(s, f\times g) = (2\pi)^{-2s}\Gamma(s)\Gamma(s+1-l)$ is the archimdean factor and for a 
Dirichlet character $\omega$ modulo $N$ put $L_N(s, \omega) = \sum_{n=1}^\infty \omega(n)n^{-s}$ with $\omega(n) =0 $ if $(n,N) \neq 1.$
If $g$ is also a normalized eigenform in $S_l(N, \psi),$ then $L(s,f\times g)$ satisfies a functional equation
\begin{equation}
	L(s, f\times g) \approx L(k+l -1-s,f^\rho, g^\rho).
\end{equation} 
See Shimura \cite[Section 3]{shimura} and Hida \cite[Section 9]{hida}. The line of symmetry for this functional equation is $\text{Re}(s) = (k+l-1)/2.$  
An integer $m$ is {\it critical} for $L(s,f\times g),$ if the Gamma factors on both sides of the functional equation are finite at $s = m,$ i.e., 
if $\Gamma(m)\Gamma(m+1-l)$ and $\Gamma(k+l -1-m)\Gamma(k -m)$ are finite. 
Therefore the critical set is:
$$
\{m \in \mathbb{Z} \,|\, l \leq m \leq k-1\}.
$$ 
The following result of Shimura gives an algebraicity theorem for the values 
of the Rankin--Selberg convolution at critical points to the right of the line of symmetry:

\smallskip
\begin{theorem}
\label{thm:main-theorem}
		Let $f$ be a normalized eigenform of $S_k(N)$, $g$ an element of $G_l(N)$, and $m$ a positive integer.  Suppose $l<k$ and $\lfloor \frac{1}{2}(k+l-1) \rfloor < m < k$. Then $\pi^{-k}\langle f,f\rangle^{-1}D(m,f\times g)$ 
		belongs to $\Q(f)\Q(g)$, the compositum of $\Q(f)$ and $\Q(g)$. Moreover, for every automorphism $\sigma$ of $\mathbb{C}$, we have
		\begin{equation*}
				\sigma\left(\pi^{-k}\langle f,f \rangle^{-1} D(m,f\times g) \right) \ = \ 
				\pi^{-k}\langle f^\sigma, f^\sigma \rangle^{-1}D(m,f^\sigma\times g^\sigma). 
		\end{equation*}
\end{theorem}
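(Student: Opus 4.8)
The plan is to reduce the critical value $D(m,f\times g)$ to a Petersson product against a single holomorphic eigenform of weight $k$, and then to read off both algebraicity and Galois behaviour from the Fourier coefficients of that eigenform. First I would set $r = k-1-m$ and observe that the hypothesis $\frac12(k+l-2) < m < k$ is equivalent to $r \geq 0$ together with $l + 2r < k$; this is precisely the range in which Theorem~\ref{thm: l-value-and-petersson-inner-product} applies with $\lambda = k-l-2r$. Invoking it gives
\[
D(m,f\times g) \ = \ c_r\,\pi^k\,\bigl\langle f_\rho,\ g\cdot\delta_\lambda^{(r)}E_{\lambda,N}^*(z,\chi\psi)\bigr\rangle, \qquad c_r \in \Q.
\]
Since $c_r$ is rational it is inert under every automorphism of $\C$, so all the arithmetic content is concentrated in the Petersson product on the right.

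Next I would apply holomorphic projection. By Lemma~\ref{lemma: hol-projection}, combined with the explicit decomposition \eqref{eqn:relations-hi}, the nearly holomorphic form $g\cdot\delta_\lambda^{(r)}E_{\lambda,N}^*$ can be written as $\sum_{\nu}\delta_{k-2\nu}^{(\nu)}h_\nu$ with each $h_\nu \in M_{k-2\nu}(N,\chi\psi)$ determined through the recursions of Lemma~\ref{lemma:h-recursive-relations}. The orthogonality relation of Lemma~\ref{lemma: zero-lemma} kills every term with $\nu\geq 1$, so only the weight-$k$ piece $h_0$ survives, as recorded in Lemma~\ref{lemma: fgdelta_fh0}; hence
\[
D(m,f\times g) \ = \ c_r\,\pi^k\,\langle f_\rho, h_0\rangle, \qquad h_0 \in M_k(N,\chi\psi).
\]

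The crux is then to identify $\langle f_\rho, h_0\rangle$ with a $\Q(f)\Q(g)$-multiple of $\langle f, f\rangle$. I would first check that $h_0$ has Fourier coefficients in $\Q(f)\Q(g)$: the recursions of Lemma~\ref{lemma:h-recursive-relations} express the $h_\nu$ as $\Q$-linear combinations of terms $g\cdot d^{j}E_{\lambda,N}^*$ and lower-weight corrections, where $d=q\,\tfrac{d}{dq}$ preserves the coefficient field, the coefficients of $E_{\lambda,N}^*(z,\chi\psi)$ are algebraic over $\Q$ by Hecke's computation (cf.\ the expansion of $E_{2,N}^*$ and Lemma~\ref{lemma: action-of-d-on-1-over-4piy}) with values of $\chi\psi$ lying in $\Q(f)\Q(g)$, and the coefficients of $g$ lie in $\Q(g)$. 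I would then decompose $h_0$ along a Hecke eigenbasis of the cuspidal part together with the Eisenstein part. Since distinct eigensystems and the Eisenstein part are Petersson-orthogonal to $f_\rho$, only the $f_\rho$-component contributes: $\langle f_\rho, h_0\rangle = \beta\,\langle f_\rho, f_\rho\rangle$, where $\beta$ is the coefficient of $f_\rho$ in $h_0$. Applying the Hecke projector onto the $f_\rho$-eigenline---which is defined over $\Q(f_\rho)=\Q(f)$ by multiplicity one and the $\Q$-rationality of the Hecke operators---to the coefficient-rational form $h_0$ shows $\beta\in\Q(f)\Q(g)$ (read off, say, from the first Fourier coefficient). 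Finally, since $f\mapsto f_\rho$ is a Petersson isometry, $\langle f_\rho, f_\rho\rangle = \langle f, f\rangle$, so that
\[
\pi^{-k}\langle f,f\rangle^{-1}D(m,f\times g) \ = \ c_r\,\beta \ \in \ \Q(f)\Q(g),
\]
which is the first assertion.

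For the equivariance, I would apply an automorphism $\sigma$ of $\C$. As $c_r\in\Q$ is fixed, it suffices to show that $\sigma(\beta)$ equals the analogous projection coefficient for $f^\sigma$ and $g^\sigma$. The Fourier coefficients of $h_0$ are universal $\Q$-rational expressions, through the recursions of Lemma~\ref{lemma:h-recursive-relations}, in those of $g$ and of $E_{\lambda,N}^*(z,\chi\psi)$; since $\sigma$ carries the latter to the coefficients of $E_{\lambda,N}^*(z,(\chi\psi)^\sigma)$ and the former to those of $g^\sigma$, the form obtained by applying $\sigma$ to the coefficients of $h_0$ is exactly the $h_0$ built from $f^\sigma,g^\sigma$. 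Moreover $\sigma$ intertwines the Hecke eigenprojections and sends the $f_\rho$-line to the $(f_\rho)^\sigma=(f^\sigma)_\rho$-line (using that complex conjugation is central on the CM-or-totally-real coefficient field), so $\sigma(\beta)$ is the projection coefficient attached to $f^\sigma,g^\sigma$. Reassembling the weight-$k$ identity for the pair $f^\sigma,g^\sigma$ then yields the displayed formula. The step I expect to be the main obstacle is precisely this Step~3--4 bookkeeping: controlling the rationality of the holomorphic projection $h_0$ through the recursions and, simultaneously, the $\sigma$-equivariance of the eigenprojection and of the Eisenstein coefficients. The Petersson manipulations of the first two steps are formal consequences of the lemmas already in hand; it is the arithmetic of the projection that carries the real weight.
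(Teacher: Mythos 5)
Your proposal is sound, but you should know that the paper never proves Theorem~\ref{thm:main-theorem}: it is quoted as a known result of Shimura \cite{shimura}, and what you have written is essentially a reconstruction of Shimura's original argument. Your chain of reductions --- Theorem~\ref{thm: l-value-and-petersson-inner-product} with $r=k-1-m$, $\lambda=k-l-2r$, holomorphic projection via Lemmas~\ref{lemma: hol-projection}, \ref{lemma: zero-lemma}, \ref{lemma: fgdelta_fh0} and the recursions of Lemma~\ref{lemma:h-recursive-relations}, then extraction of the $f_\rho$-coefficient $\beta$ --- is precisely the justification the paper gives for its algorithm in \ref{subsection: algorith-gl2-x-gl2}, so your route coincides with the mechanism underlying the paper even though the paper itself only cites. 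Three points need care. First, your ``multiplicity one'' step $\langle f_\rho,h_0\rangle=\beta\,\langle f_\rho,f_\rho\rangle$ is valid when $f$ is a \emph{newform} (newforms are orthogonal to the old space and to the other newforms), but the theorem assumes only an eigenform; the paper itself notes in Section~\ref{section: weight-26-x-13} that for an old eigenform $\langle f,\hat f\rangle\neq 0$, so in that generality the projection must be corrected by the Gram matrix of the old block. Second, $(f_\rho)^\sigma=(f^\sigma)_\rho$ needs $\sigma$ to commute with complex conjugation on $\Q(f)$, which holds because coefficient fields of newforms are totally real or CM --- your parenthetical gestures at this correctly but should be stated as such. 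Third, for $\lambda=1$, and for $\lambda=2$ with nontrivial character, $E^*_{\lambda,N}(z,0,\omega)$ exists only via Hecke's analytic continuation (its coefficients are still algebraic and $\sigma$-equivariant), while the $\lambda=2$, trivial-character case with the $c/4\pi y$ term is exactly what part (i) of Lemma~\ref{lemma:h-recursive-relations} handles, as you observe.
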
 
One deduces that for an integer $m$ with $\frac{1}{2}(k+l-2) < m < m+1 < k$ then 
$$
	\frac{D(m, f\times g)}{D(m+1, f\times g)}  \in \mathbb{Q}(f)\mathbb{Q}(g).
$$

\bigskip

The parameters determining the abelian part and the infinite part of $L(s,f\times g)$ in \eqref{eqn:abelian-infinity} 
depend only on the weights and the nebentype characters involved. For $f,f' \in S_k(N,\chi)$ and $g \in M_l(N,\psi)$ one has 
\begin{equation}
L_{\infty}(s,f\times g) L_N(2s+2-k-l, \chi \psi) = L_{\infty}(s,f'\times g) L_N(2s+2-k-l, \chi \psi).
\label{eqn: infinite-and-abelian-L-function}
\end{equation}
For brevity, let $\omega = \chi\psi.$ 
Suppose $\omega$ is a Dirichlet character modulo $N$ which is primitive. 
Then $L_N(s, \omega) = L(s,\omega) $. Let  $\epsilon \in \{0,1\}$ such that $\omega(-1) = (-1)^\epsilon$. 
For $m\in \mathbb{N}$ and $m \equiv \epsilon \pmod{2}$ it is well known that
\begin{equation*}
	L(m, \omega) = (-1)^{1+ (m-\epsilon)/2} \frac{\mathfrak{g}(\omega)}{2\sqrt{-1}^\epsilon} \Bigl( \frac{2\pi}{N}\Bigr)^m \frac{B_{m,\bar{\omega}}}{m!},
\end{equation*} 
where $\mathfrak{g}(\omega)$ is the Gauss sum associated to $\omega$ and $B_{m,\bar{\omega}}$ is the generalized 
Bernoulli number for the character $\bar{\omega} = \omega^{-1}$. See Neukirch \cite{neukirch}. From this it follows that
\begin{equation}
\label{eqn:ratio-of-dirichlet}
	\frac{L(m,\omega)}{L(m+2, \omega)} \ = \ -(m+2)(m+1)\Bigl( \frac{N}{2\pi}\Bigr)^2 \frac{B_{m,\bar{\omega}}}{B_{m+2, \bar{\omega}}}. 
\end{equation}
If $\omega$ modulo $N$ is not primitive take $\omega^{\text{prim}}$ to be the primitive Dirichlet character defined by $\omega$. 
Then $L_N(s, \omega) = \prod_{p| N}\left(1 - \omega^{\text{prim}}(p)p^{-s}\right) L(s, \omega^{\text{prim}}).$ 
Hence, from \eqref{eqn:abelian-infinity}, \eqref{eqn:ratio-of-dirichlet}, and Theorem \ref{thm:main-theorem}, 
for any integer $m$ with $\frac{k+l-2}{2} < m < m+1 < k$, it follows that:
\begin{equation}
	\frac{L(m, f\times g)}{L(m+1, f \times g)} \ \in \ \mathbb{Q}(f)\mathbb{Q}(g).
\end{equation}

This is proved in greater generality in the case of $L$-functions attached to cohomological cuspidal automorphic representations of 
$({\rm GL}_n \times {\rm GL}_{n'})/F,$ when $nn'$ is even and $F$ a totally real number field, by Harder and the second author \cite{harder-raghuram}.
\medskip

\subsection{An algorithm for the algebraic part of $D(m,f\times g)$}
\label{subsection: algorith-gl2-x-gl2}
\hfill

If $f \in S_k(\Gamma_1(N))$ is a newform then 
$f_\rho = \overline{f(-\overline{z})} = f^\rho$ is again a newform of $S_k(\Gamma_1(N)),$  
where, recall that for any automorphism $\sigma$ of $\C$, one defines 
$f^\sigma := \sum_{n=1}^\infty a(n,h)^\sigma q^n.$  

\medskip 

\noindent \textbf{Input:} $f \in S_k(N, \chi)$ a newform of level $N,$ and $g \in S_l(N)$ an arbitrary cusp form with algebraic Fourier coefficients.
  
\medskip 
   
\noindent \textbf{Output:} The algebraic number  $\frac{D(m, f \times g)}{\pi^k \langle f, f \rangle}$
for $\frac{k+l-2}{2} < m < k.$ 

\medskip 
 
\begin{enumerate}

	\item[Step 1.] Extend $f_\rho$ to a basis consisting of eigenforms $S_k(\Gamma_1(N))$ which we denote by 
	$\mathcal{B} = \{f_0 := f_\rho, f_1, \dots, f_n\}$. (The dimension is $n+1$.)
	
	\smallskip
	
	\item[Step 2.] Fix an $m$ in the critical set. Put $r = k-1-m$.	Pick the correct Eisenstein series $E_{\lambda, N}^*(z,\chi\psi)$ where $\lambda = k-l-2r$. 
	
	\smallskip
	
	\item[Step 3.] Find the holomorphic projection $h_0$ of $g\delta_{\lambda}^{(r)}E_{\lambda,N}(z, \chi\psi)$ using the recursive relations in Lemma \ref{lemma:h-recursive-relations}.
	
	\smallskip
	
	\item[Step 4.] Write the modular form $h_0$ in terms of $\mathcal{B}$, i.e., $h_0 = \alpha_0 f_0 + \alpha_1 f_1 + \cdots + \alpha_n f_n$.
	
	\smallskip
	
	\item[Step 5.]   $ \textbf{Return:} \quad c_r \alpha_0.$  
\end{enumerate}
This is the desired value due to Lemma \ref{lemma: fgdelta_fh0}, Lemma \ref{lemma: zero-lemma}, Theorem \ref{thm: l-value-and-petersson-inner-product}, 
and $\langle f_0, f_i \rangle = 0$ for $i =1 , \dots, n$ and $\langle f^\rho , f^\rho \rangle = \langle f, f \rangle$. 
The constant $c_r$ appears in Theorem \ref{thm: l-value-and-petersson-inner-product}.

\smallskip

This algorithm can be extended to the case when $f$ is only assumed to be an eigenform instead of newform. See Section \ref{section: weight-26-x-13} for an example.

\medskip 
\subsection{An algorithm for the algebraic part of $D(m,f)$}
\label{subsection: algorith-gl2}
We describe a well-known algorithm to calculate the algebraic part of the special value of $L(s,f)$ for the standard $L$-function of a new form 
$f$ at $s = m$ a critical value. 
The term algebraic is explained below. 
We need some preliminaries on computing $L$-values by modular symbols.

\subsubsection{Special values of $L$-functions via modular symbols}
\label{sec:speical-values-of-l-functions-via-modular-symbols}

Let $\Gamma$ be a congruence subgroup of $\SL_2(\mathbb{Z})$ and $\mathbb{P}^1(\mathbb{Q}) = \mathbb{Q}\cup \{\infty\}.$ 
Define $\mathbb{M}_2$ to be the free abelian group on the symbols $\{\alpha,\beta\} \in \mathbb{P}^1(\mathbb{Q})$ modulo the relations 
$ \{\alpha,\beta\} + \{\beta,\gamma\} + \{\gamma, \alpha\}=0 $ for all $\alpha, \beta ,\gamma \in \mathbb{P}^1(\mathbb{Q})$ and modulo all torsion. 
Let $\mathbb{Z}[X,Y]_{n}$ be the abelian group of homogeneous polynomials of degree $n$ in two variables $X$ and $Y$. 
Make $\mathbb{Z}[X,Y]_n$ into a $\Gamma$-module: if $\gamma = \left(\begin{smallmatrix} a & b \\c & d\end{smallmatrix} \right) \in \Gamma$ and 
$P \in \mathbb{Z}[X,Y]_n$ then define $(\gamma P)(X,Y) = P(dX-bY,-cX+aY).$ The abelian group $\mathbb{M}_2$ can also be made into a 
$\Gamma$-module by the action $ g\{\alpha, \beta\} := \{g\alpha, g\beta\}$ for all $g\in \Gamma$ and $\{\alpha, \beta\} \in \mathbb{M}_2$. 
Define the $\Gamma$-module  $\mathbb{M}_k = \mathbb{Z}[X,Y]_{k-2}\otimes_{\mathbb{Z}}\mathbb{M}_2 $
with the $\Gamma$ acting diagonally.
\begin{definition}(Modular Symbols) 
For an integer $k\geq 2$ and a congruence subgroup $\Gamma$, the space $\mathbb{M}_k(\Gamma)$ of weight $k$ modular symbols for 
$\Gamma$ is the quotient of $\mathbb{M}_k$ by all the relations $\gamma x-x$ for $x \in \mathbb{M}_k$, and $\gamma \in G$ and by any torsion.
\end{definition}

For $P\in \mathbb{Z}[X,Y]_{k-2}$ and $\gamma \in \SL_2(\mathbb{Z})$  the associated \textit{Manin symbol} is
\begin{equation*}
	[P, \gamma] := \gamma (P\otimes\{0,\infty\}) \in \mathbb{M}_k(\Gamma). 
\end{equation*}
When $\Gamma= \Gamma_1(N)$ and if $\gamma = \left(\begin{smallmatrix} a & b \\ c & d\end{smallmatrix} \right), \ 
\gamma' = \left(\begin{smallmatrix} a' & b'  \\ c' & d'\end{smallmatrix}\right) \in \Gamma_1(N)$ 
are such that $(c,d)\equiv (c',d')\pmod{N}$, then $[P, \gamma]= [P, \gamma']$. Hence, the Manin symbol $[P, \gamma]$ is determined by 
$P$ and the lower two entries $c,d$ of the matrix $\gamma$. So we take $[P,(c,d)]$ itself to be a Manin symbol. For the following proposition, see, 
for example, Stein \cite[Prop.\,8.3]{stein}.

\begin{proposition}
	The Manin Symbols generate $\mathbb{M}_k(\Gamma).$
\end{proposition}

For any ring $R$ put $\mathbb{M}_k(\Gamma, R):= \mathbb{M}_k(\Gamma) \otimes_\mathbb{Z} R.$
There is a notion of boundary modular symbols and cuspidal symbols denoted by $\mathbb{B}_k(\Gamma)$ and $\mathbb{S}_k(\Gamma)$ respectively. 
One can also define Hecke operators on the space of modular symbols; see Stein \cite[Section 8.3]{stein}. 
The space of cuspidal symbols is stable under the action of Hecke operators.

\begin{theorem}
Let $S_k(\Gamma)$ and $\bar{S}_k(\Gamma)$ denote the space of holomorphic and anti-holomorphic cusp forms respectively. Then the pairing
$( \cdot , \cdot ) : (S_k(\Gamma)\oplus\bar{S}_k(\Gamma)) \times \mathbb{M}_k(\Gamma, \C) \rightarrow \mathbb{C}$
$$
( (f_1,f_2), P\otimes\{\alpha, \beta\} ) \ \mapsto \ \int_{\alpha}^{\beta} f_1(z)P(z,1)dz + \int_{\alpha}^{\beta}f_2(z)P(\bar{z},1)d\bar{z}
$$
 is Hecke equivariant, i.e., for $x \in \mathbb{M}_k(\Gamma, \mathbb{C})$ we have
$(T_n(f_1,f_2),x ) = ( f_1,f_2,T_n(x)).$
	\label{thm: fundamental-properties-of-modular-symbols}
\end{theorem}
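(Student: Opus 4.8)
The plan is to reduce the asserted Hecke-equivariance to a single transformation law for the pairing under the weight-$k$ slash action of one rational matrix of positive determinant, and then to sum that law over the coset representatives defining $T_n$ on the two sides. Write $(f|_k g)(z) = (\det g)^{k-1}(cz+d)^{-k} f(gz)$ for $g=\left(\begin{smallmatrix} a & b\\ c& d\end{smallmatrix}\right)$, and recall that the Hecke operator on cusp forms acts by $f\mapsto\sum_i f|_k\delta_i$ for a decomposition $\Delta_n = \bigsqcup_i \Gamma\delta_i$ of the integral matrices of determinant $n$, while $T_n$ on $\mathbb{M}_k(\Gamma)$ is attached to the same double coset as in \cite[Section 8.3]{stein}. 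The slash acts on an anti-holomorphic form through its complex conjugate, so both components of $(f_1,f_2)$ are treated uniformly.

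The heart of the argument is the identity
$$(f|_k g, \ P\otimes\{\alpha,\beta\}) \ = \ (f, \ (gP)\otimes\{g\alpha,g\beta\}),$$
valid for every $g$ of positive determinant, where $(gP)(X,Y)=P(dX-bY,-cX+aY)$ is the action already fixed in the text. I would prove it by the substitution $w=gz$ in $\int_\alpha^\beta (f|_kg)(z)P(z,1)\,dz$: the measure contributes $dz=(\det g)^{-1}(cz+d)^2\,dw$, the cocycle relation gives $-cw+a=(\det g)(cz+d)^{-1}$, and homogeneity of $P$ of degree $k-2$ turns $P(z,1)=P(g^{-1}w,1)$ into $(\det g)^{-(k-2)}(cz+d)^{k-2}(gP)(w,1)$. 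Collecting the powers of $\det g$ and of $(cz+d)$, every extraneous factor cancels and one is left with $\int_{g\alpha}^{g\beta} f(w)(gP)(w,1)\,dw$. The anti-holomorphic summand transforms in the same way: since $\alpha,\beta\in\mathbb{P}^1(\mathbb{Q})$ are fixed by conjugation and $P$ has rational coefficients, that integral is the complex conjugate of the holomorphic computation.

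Two consequences are immediate. Taking $g=\gamma\in\Gamma$, where $\det\gamma=1$ and each component of $(f_1,f_2)$ is fixed by the weight-$k$ action, the identity yields $((f_1,f_2),\,\gamma\cdot x-x)=0$; additivity of the path integral kills the relations $\{\alpha,\beta\}+\{\beta,\gamma\}+\{\gamma,\alpha\}$, and torsion pairs to $0$ because the target $\mathbb{C}$ is torsion-free. Hence the pairing descends to $\mathbb{M}_k(\Gamma)$, the integrals converging at the cusps by the rapid decay of cusp forms against the polynomial growth of $P$. For the Hecke operators, applying the identity to each representative and summing gives
$$(T_n(f_1,f_2), \ x) \ = \ \sum_i \big((f_1,f_2)|_k\delta_i, \ x\big) \ = \ \sum_i \big((f_1,f_2), \ \delta_i\cdot x\big) \ = \ \big((f_1,f_2), \ T_n x\big),$$
once $T_n x$ on symbols is identified with $\sum_i \delta_i\cdot x$.

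The step I expect to be delicate is this last identification: one must verify that $x\mapsto\sum_i\delta_i\cdot x$ is well defined on the quotient $\mathbb{M}_k(\Gamma)$ and coincides with the operator of \cite{stein}. Well-definedness requires that for each $\gamma\in\Gamma$ the family $\{\delta_i\gamma\}$ be, up to left multiplication by $\Gamma$, a permutation of $\{\delta_i\}$ — the standard compatibility of the Hecke correspondence with the $\Gamma$-action — and the coincidence requires pinning both operators to the same double coset $\Gamma\delta\Gamma$ together with the determinant bookkeeping that made every spurious factor cancel above. Granting these conventions, the equivariance is the formal content of the summed transformation law, and the only genuine computation is the single change of variables.
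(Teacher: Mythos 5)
Your argument is correct. Note, however, that the paper offers no proof of this theorem at all: it is stated as a known fact, with the Hecke action on modular symbols and the pairing both deferred to Stein \cite[Section 8.3]{stein}. So there is no internal proof to compare against; what you have written is essentially the standard argument from the cited literature (going back to Manin), namely the single change-of-variables identity
$$(f|_k g,\ P\otimes\{\alpha,\beta\}) \ = \ (f,\ (gP)\otimes\{g\alpha,g\beta\}),$$
from which $\Gamma$-invariance (hence descent to $\mathbb{M}_k(\Gamma)$) and Hecke equivariance both follow by specializing $g$ and summing over coset representatives. Your bookkeeping checks out: with the normalization $(f|_k g)(z)=(\det g)^{k-1}(cz+d)^{-k}f(gz)$ the exponents of $\det g$ collect to $(k-1)-(k-2)-1=0$ and those of $(cz+d)$ to $-k+(k-2)+2=0$, and the resulting polynomial action $P\mapsto P(dX-bY,-cX+aY)$ is exactly the one the paper fixes, so the identity holds with no spurious constants --- which is precisely what is needed for the two $T_n$'s, defined by the same left-coset decomposition $\Delta_n=\bigsqcup_i\Gamma\delta_i$, to intertwine. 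You also correctly isolate the two genuine hygiene points: that $x\mapsto\sum_i\delta_i\cdot x$ is well defined on the quotient because right multiplication by $\gamma\in\Gamma$ permutes the cosets $\Gamma\delta_i$, and that both operators must be pinned to the same double coset and determinant normalization as in \cite{stein}. The only step you gloss is the anti-holomorphic summand: the clean way to say it is that for $P$ with real coefficients one has $\int_\alpha^\beta f_2(z)P(\bar z,1)\,d\bar z=\overline{\int_\alpha^\beta \overline{f_2}(z)P(z,1)\,dz}$ with $\overline{f_2}$ a holomorphic cusp form, and the slash and Hecke actions on $\bar{S}_k(\Gamma)$ are defined by conjugating those on $S_k(\Gamma)$ (legitimate since the coset matrices are rational), so equivariance transports by conjugation; this is a convention to record, not a gap.
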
 

Assume $\Gamma$ is a finite index subgroup of $\SL_2(\Z)$ such that $\eta \Gamma \eta = \Gamma,$ where $\eta = \text{diag}[-1,1].$ 
Then an involution $\iota^*$ on a Manin symbol is defined by $\iota^* [P(X,Y), (u,v)] = [P(X,-Y), (-u,v)].$ The involution $\iota^*$ commutes with the action of 
Hecke operators and it stabilizes the cuspidal subspace. We denote by $\mathbb{S}_k(\Gamma)^-$ and $\mathbb{S}_k(\Gamma)^+$ to be subspace of 
the cuspidal symbols which are the $-1$ and $+1$ eigenspaces of the action of $\iota^*,$ respectively. 
	\begin{theorem}
	The integration pairing $( \cdot , \cdot )$ induces nondegenerate Hecke-equivariant bilinear pairings
	\begin{equation*}
		{S}_k(\Gamma) \times \mathbb{S}_k(\Gamma, \mathbb{C})^- \rightarrow \mathbb{C}    \quad \text{and} \quad 
		\overline{S}_k(\Gamma)  \times \mathbb{S}_k(\Gamma, \mathbb{C})^+  \rightarrow \mathbb{C}.
	\end{equation*}
\end{theorem}
The signs $\pm$ are interchanged in the above theorem  in comparison with the one given in \cite{stein} 
because we do not take the `$-$' sign in the definition of $\iota^*.$ This convention is adopted because it is the one implemented in SAGE \cite{sage} 
which we use for the computations that follow.

For a cusp form $f \in S_k(N)$ with Fourier expansion $\sum a(n,f) q^n$, the associated Dirichlet series $D(s, f)$ and completed $L$-function $L(s,f)$ are given by: 
\begin{equation}
D(s,f) := \sum_{n=1}^\infty \frac{a(n, f)}{n^s}, \quad 
L(s, f) :=  (2\pi)^{-s} \Gamma(s) D(s, f) =  
\int_0^\infty f(\sqrt{-1}t)t^s \frac{dt}{t}, \quad \Re(s) \gg 0.
\label{eqn:sp-value-appendix} 
\end{equation}
Consider the modular symbols $X^{j}Y^{k-2-j}\otimes\{0,\infty\}$ for $j=0,1,2,\dots, k-2$. 
The non-degenerate and Hecke equivariant pairing of these modular symbols against the cusp form $f$ is:
\begin{equation*}
	(f, X^{j}Y^{k-2-j}\otimes \{0,\infty\} ) = \int_0^{i \infty} f(z) z^j dz = i^{j+1} \int_{0}^{\infty} f(i t) t^{j} dt.
\end{equation*}
By \eqref{eqn:sp-value-appendix}, for $m=1,2\dots,k-1$, we have:
\begin{align}
	D(m, f) = \frac{(-2\pi \sqrt{-1})^{m}}{(m-1)!} \cdot ( f, X^{m-1}Y^{k-2-(m-1)}\otimes\{0,\infty\}).\label{eqn:modsym-spvalue-appendix}
\end{align}
For $m=1,2\dots,k-1$, define the algebraic part of a critical value $L(m,f)$ by: 
\begin{align}
	\frac{D(m,f)}{(-2\pi \sqrt{-1})^{m-1} D(1,f)} & 
	= \frac{1}{(m-1)!}\cdot \frac{(f, X^{m-1}Y^{k-2-(m-1)}\otimes\{0,\infty\})}{(f, Y^{k-2}\otimes \{0, \infty\})} \;\; \text{for }m =1,3, \dots, \label{eqn: odd-l-value}\\
	\frac{D(m,f)}{(-2\pi \sqrt{-1})^{m-2} D(2,f)} & 
	= \frac{1}{(m-1)!}\cdot \frac{(f, X^{m-1}Y^{k-2-(m-1)}\otimes\{0,\infty\})}{(f, X^{1}Y^{k-3}\otimes \{0, \infty\})} \;\; \text{for }m =2,4, \dots. \label{eqn: even-l-value}
\end{align}

\medskip
\subsubsection{Some preliminaries for $\SL_2(\mathbb{Z})$}

The following are well known: 

\smallskip
\begin{enumerate}
\item The space of cusp forms $S_k(\SL_2(\Z))$ has co-dimension $1$ in $M_k(\SL_2(\Z))$ and let $d$ denote the dimension of $S_k(\SL_2(\Z))$. 

\smallskip
\item The space $\mathbb{M}_k(\SL_2(\Z),\Q)^-$ (resp., $\mathbb{M}_k(\SL_2(\Z), \Q)^+$) is generated by $X^iY^{k-2-i} \otimes \{ 0, \infty\}$, 
or equivalently by the Manin symbols $[X^iY^{k-2-i},(0,1)]$ for odd $i$ (resp., for even $i$). For brevity, we denote the Manin symbol $[X^iY^{k-2-i},(0,1)]$ by $(i,0,1)$.

\smallskip
\item $\mathbb{S}_k(\SL_2(\Z), \Q)^- = \mathbb{M}_k(\SL_2(\Z), \Q)^- .$  

\smallskip
\item $\mathbb{S}_k(\SL_2(\Z), \Q)^+$ has co-dimension $1$ in $\mathbb{M}_k(\SL_2(\Z), \Q)^+.$
\end{enumerate}

\medskip
\subsubsection{} {\it The algorithm for even critical points.}
\label{subsubsection: algorithm-gl2-even} \\

\noindent \textbf{Input:} The first $d$ Fourier coefficients $a(1,f) = 1, \, a(2,f),\dots, \, a(d, f)$ 
of a normalized eigenform $f \in S_k(\SL_2(\mathbb{Z}))$ which are assumed to be real.

\medskip

\noindent \textbf{Output:} $\frac{D(m,f)}{(-2\pi \sqrt{-1})^{m-2} D(2,f)}$ for $m=2,4, \dots, k-2.$

\smallskip
\begin{enumerate}
	\item[Step 1.] Find a basis $b_1^-, b_2^-, \dots, b_d^-$ for the space $\mathbb{M}_k(\SL_2(\Z), \mathbb{Q})^- = \mathbb{S}_k(\SL_2(\Z), \mathbb{Q})^-.$
	
	\smallskip
	\item[Step 2.]  Express the Manin symbols $(i,0,1)$ for $i$ odd and $0\leq i \leq k-3$ in terms of the basis $b_1^-, b_2^-,\dots, b_d^-.$ 
	Let $A^- = [a^-_{ij}]$ be the matrix with rational entries such that $(i,0,1) = \sum_{j=1}^d a_{ji}^-b_j^-.$
	
	\smallskip
	\item[Step 3.] Compute the matrices $ M_2^-, \dots, M_d^-$ for the action of the Hecke operators $T_2^-, \dots, T_d^-$ 
	on $\mathbb{M}_k(\SL_2(\Z), \mathbb{Q})^-$ with respect to the basis $b_1^-, b_2^-, \dots, b_d^-.$ 
	
	\smallskip
	\item[Step 4.] Form the matrix
	\[	\mathcal{L}^- = \left( \begin{array}{c}  {^t}M_2^- - a(2,f)I_{d} \\
		 \vdots \\
		 {^t}M_d^- - a(d,f) I_{d}
	\end{array}\right). \]
	
	\smallskip
	\item[Step 5.] Row-reduce the matrix $\mathcal{L}^-$ and pick any non-zero vector 
	$w^- := {^t}(w_1^-, w_2^-, w_3^-, \dots, w_d^-)$ from the null space of $\mathcal{L}^-$. 
	
	\smallskip
		\item[Step 6.] Compute the $\frac{k-2}{2} \times 1$ column matrix $E :=  {^t}A^- \cdot w^-.$ Let $c_1,c_3, \dots, c_{k-3}$ be the entries of $E$.

	\smallskip 	
	\item[Step 7.] $\textbf{Return:} \qquad \dfrac{1}{(i-1)!}\cdot \dfrac{c_i}{c_1} \text{ for $i=1,3,\dots, k-3$}.$
\end{enumerate}

This is the desired output by \eqref{eqn: even-l-value} and 
\begin{gather*}
	\frac{(f, X^{i-1} Y^{k-2-(i-1)}\otimes \{0,\infty\})}{(f, Y^{k-2}\otimes \{0,\infty\} )} = \frac{c_i}{c_1} \:\:\: \mbox{for}\:\: i=1,3, \dots k-3.
\end{gather*}
See \ref{section: correctness-of-the-algorithms} for a proof of the algorithm.

\medskip
\subsubsection{} {\it The algorithm for odd critical points}
\label{subsubsection: algorithm-gl2-odd} \\

\noindent \textbf{Input:} The first $d$ Fourier coefficients $a(1,f) = 1, \, a(2,f),\dots, \, a(d, f)$ 
	of a normalized eigenform $f \in S_k(\SL_2(\mathbb{Z}))$ which are assumed to be real.

\medskip

\noindent \textbf{Output:} $\frac{D(m,f)}{(-2\pi \sqrt{-1})^{m-1} D(1,f)}$ for $m=1,3, \dots,k-1.$

\medskip
\begin{enumerate}
	\item[Step 1.] Find a basis $b_1^+, b_2^+, \dots, b_{d+1}^+$ for the space $\mathbb{M}_k(\SL_2(\Z), \mathbb{Q})^+.$
	
	\smallskip
	\item[Step 2.]  Express the Manin symbols $(i,0,1)$ for $i$ even and $0\leq i \leq k-2$ in terms of the basis $b_1^+, b_2^+, \dots, b_{d+1}^+.$ 
	Let $A^+ = [a^+_{ij}]$ be a matrix with rational entries such that  $(i,0,1) = \sum_{j=1}^{d+1} a^+_{ji}b_j^+.$
	
	\smallskip
	\item[Step 3.] Compute the matrices $ M_2^+, \dots, M_{d}^+$ of the Hecke operators $T_2^+, \dots, T_{d}^+$ acting on 
	$\mathbb{M}_k(\SL_2(\Z), \mathbb{Q})^+$ with respect to $b_1^+, b_2^+, \dots, b_{d+1}^+.$
	
	\smallskip
	\item[Step 4.] Form the block matrix
	\[	\mathcal{L}^+ = \left( \begin{array}{c}  {^t}M_2^+ - a(2,f) I_{d+1} \\
		 \vdots \\
		 {^t}M_{d}^+ - a(d,f) I_{d+1}
	\end{array}\right). \]
	
	\smallskip
	\item[Step 5.]Row-reduce the matrix $\mathcal{L}^+$ and pick any non-zero vector $w^+ := {^t}(w_1^+, w_2^+, w_3^+, \dots, w_{d+1}^+)$ 
	from the null space of $\mathcal{L}^+$.
	
	\smallskip
	\item[Step 6.] Compute the $ \frac{k}{2}\times 1$ column matrix $O :=  {^t}A^+ \cdot w^+.$ Let $d_0,d_2, \dots, d_{k-2}$ be the entries of $O$.
	
	\smallskip
	\item[Step 7.] $\textbf{Return:} \qquad \dfrac{1}{(i-1)!}\cdot \dfrac{d_i}{d_0} \text{ for $i=0,2, \dots, k-2$}.$	
\end{enumerate}
This is the desired output by \eqref{eqn: odd-l-value} and
\begin{gather*}
	\frac{(f, X^{i-1} Y^{k-2-(i-1)}\otimes \{0,\infty\})}{(f, Y^{k-3}\otimes \{0,\infty\} )} = \frac{d_i}{d_0} \:\:\: \mbox{for}\:\: i=0,2, \dots, k-2.
\end{gather*}

\medskip
\subsubsection{The correctness of the algorithms} \label{section: correctness-of-the-algorithms}
The correctness of both the algorithms, after a slight modification, are the same as in 
Manin \cite[Sections 4.1, 4.2, and 6]{manin}. For the reader's convenience a proof of the algorithm is adumbrated for even critical points.

Let $\widetilde{\mathbb{S}}_k(\SL_2(\Z), \C)^-$ denote the $\C$-dual space of $\mathbb{S}_k(\SL_2(\Z), \C)^-.$ 
	Define the Hecke action on the dual space $\widetilde{\mathbb{S}}_k(\SL_2(\Z), \C)^-$ in the usual way, i.e., 
	the $n$-th Hecke operator $\tilde{T}^-_n$ acts on $\tilde{f} \in \widetilde{\mathbb{S}}_k(\SL_2(\Z), \C)^-$ by 
	$\langle\!\langle f, \tilde{T}^-_n(\tilde{f})\rangle\! \rangle = \langle\!\langle T^-_n(f), \tilde{f} \rangle\!\rangle $, where $ \langle\!\langle f, \tilde{f} \rangle\!\rangle$ is evaluation of $\tilde{f}$ on $f.$
	
\begin{proof}(For even critical points.)
	 Since the $T_n$'s are a commuting family of diagonalizable operators, the space $S_k(\SL_2(\Z))$ can be decomposed into 
	 Hecke isotypic subspaces which are all one-dimensional by multiplicity one. 
	 Since the pairing $(\cdot , \cdot)$ is Hecke equivariant, the same is true for the spaces 
	 $\mathbb{S}_k(\SL_2(\Z), \C)^-$ and $\widetilde{\mathbb{S}}_k(\SL_2(\Z), \C)^-$. 
	 We can write 
	 \begin{equation*}
	 	\widetilde{\mathbb{S}}_k(\SL_2(\Z), \C)^- = \bigoplus_{f \in S_k(\SL_2(\Z))} \C \omega_f
	 \end{equation*}
	 for Hecke isotypic decomposition where $\omega_f(\_) = (f,\_) \in \widetilde{\mathbb{S}}_k(\SL_2(\Z), \C)^-$ has the same system of eigenvalues as $f$. 
	 The space $\mathbb{S}_k(\SL_2(\Z), \C)^-$ has a rational subspace 
	 $\mathbb{S}_k(\SL_2(\Z), \Q)^-$ which is stable under the Hecke operators. Let $b_1^-, b_2^-, \dots, b_d^-$ be a basis of $\mathbb{S}_k(\SL_2(\Z), \Q)^-$ 
	 and $\tilde{b}_1^-, \tilde{b}_2^-, \dots, \tilde{b}_d^-$ be the dual basis. Let $M_n^-$ be the $d\times d$ matrix of $T_n$ with respect to this basis. 
	 Then with respect to the dual basis $\tilde{b}_1^-, \tilde{b}_2^-, \dots, \tilde{b}_d^-$ the matrix of $\tilde{T}_n^-$ is simply ${^t}M_n^-$.
	 
	 A vector in the isotypic subspace $\C \omega_f$ in $\widetilde{\mathbb{S}}_k(\SL_2(\Z), \Q)^-$ 
	 lies in $\bigcap_{n } \text{Null} ({^t}M_n^- - a(n,f)1\!\!1_d).$ Since $f$ is completely determined the first $d$ many Fourier coefficients, 
	 the intersection can be taken over a smaller set
	 \[ \bigcap_{n=1 }^d \text{Null}({^t}M_n^- - a(n,f)1\!\!1_d).\]
	 (This is \textit{Step 5} in the \textit{Algorithm} where one row-reduces the matrix $\mathcal{L}^-$ to pick a vector from its null space.) 
	 Let $\tilde{w}^- = {^t}(w_1^-, w_2^-, \dots, w_n^-) = \sum_{i=1}^d w_i^- \tilde{b}_i^-$ be a non-zero vector from the intersection. 
	 By construction $\tilde{w}^-$ lies in the isotypic subspace  of $\widetilde{\mathbb{S}}_k(\SL_2(\Z), \C)^-$ determined by
	 $\tilde{T}_n^- \tilde{w}^- = a(n,f) \tilde{w}^-.$ By multiplicity one, this is the same as $\C \omega_f$. 
	 Hence, 
	 \begin{equation} 
	 \tilde{w}^- = \lambda \omega_f, \quad \mbox{for some $\lambda \in \C^\times$}. 
	 \label{eqn: equality-of-functionals}\end{equation}
	  For an odd integer $i$ and $0 \leq i \leq k-2,$ if 
	  $X^iY^{k-2-i}\otimes \{0, \infty\} = \sum_{j=1}^d a_{ji}^- b_j^{-}$ with $a_{ij} \in \mathbb{Q}$ then
	 $$
	 \tilde{w}^-(X^iY^{k-2-i}\otimes \{0, \infty\}) = \tilde{w}^-(\sum_{j=1}^d a_{ji}^- b_j^{-}) = \sum_{j=1}^d a_{ji}^- \tilde{w}^-(b_j^-) = \sum_{j=1}^d  a_{ji}^- w_j^-.
	 $$
	 Set $E:=  {^t}A^- \cdot w^-$, where $A^- = [a_{ij}^-]$, then from \eqref{eqn: equality-of-functionals} it is clear that
	 \begin{equation*}
	 	{^t}E = \left( \lambda (f, X^1Y^{k-3}\otimes \{0, \infty\}),\,\, \lambda(f, X^3Y^{k-5}\otimes \{0, \infty\}), \dots, \,\, 
		\lambda(f , X^{k-3}Y^{1}\otimes \{ 0, \infty\})\right)
	 \end{equation*}
	 which proves the given algorithm.
	\end{proof}


\subsection{Sturm's criterion for congruence between cusp forms}
To check congruence between cusp forms we will use the following criterion of Sturm \cite{sturm}; see also Stein \cite[p.\,173]{stein}.

\begin{theorem}[Sturm]
	Let $\mathfrak{P}$ be a prime ideal of a number field $K$. Let $f,g \in M_k(\Gamma, \mathcal{O}_K)$ be modular forms of weight $k$ for 
	a congruence subgroup $\Gamma$ of level $N$ with Fourier coefficients in the ring of integers $\mathcal{O}_K$ of $K$. Let $m = [\SL_2(\mathbb{Z}):\Gamma].$
	Suppose 
	\begin{equation*}
		a(n,f) \equiv a(n,g) \pmod{\mathfrak{P}}
	\end{equation*}
	for all
	\begin{equation*}
		n \leq \begin{cases}
			\frac{km}{12}- \frac{m-1}{N} & \mbox{if $f-g \in S_k(\Gamma, \mathcal{O}_K$}), \\
			\frac{km}{12} & \mbox{otherwise.}
		\end{cases}
	\end{equation*}
	Then $f \equiv g \pmod{\mathfrak{P}}$. \label{corollary: sturm-bound}
\end{theorem}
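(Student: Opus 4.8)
The plan is to reduce the statement for a general congruence subgroup to the case of $\SL_2(\Z)$, where one controls the order of vanishing of a modular form at the cusp. Put $h = f - g \in M_k(\Gamma, \mathcal{O}_K)$. The hypothesis asserts $a(n,h) \in \mathfrak{P}$ for every $n$ below the stated bound, and the goal is to show that the reduction $\bar h$ of $h$ modulo $\mathfrak{P}$ vanishes identically, i.e.\ that $\operatorname{ord}_\infty(\bar h) = \infty$. Throughout, $m = [\SL_2(\Z):\Gamma]$.

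First I would pass to level one by forming the norm $H := \prod_{\gamma} h|_k \gamma$, where $\gamma$ runs over a set of representatives for $\Gamma \backslash \SL_2(\Z)$. Because the slash action merely permutes these cosets, $H$ is a modular form of weight $km$ for all of $\SL_2(\Z)$. Two points need care. First, the $q$-expansions of the individual factors $h|_k\gamma$ a priori involve $N$-th roots of unity and denominators dividing a power of $N$, so I would extend $\mathfrak{P}$ to a prime of (the integral closure of) $\mathcal{O}_K[\zeta_N]$, verify that the expansion of each factor is $\mathfrak{P}$-integral, and check that reduction commutes with the product, so that $\bar H = \prod_\gamma \overline{h|_k\gamma}$. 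Second, the factor indexed by $\gamma = 1$ is $h$ itself, while every other factor is holomorphic at the cusp and so has non-negative order there; hence $\operatorname{ord}_\infty(\bar H) \geq \operatorname{ord}_\infty(\bar h)$.

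The heart of the argument is the level-one estimate: a nonzero modular form of weight $w$ for $\SL_2(\Z)$, reduced modulo $\mathfrak{P}$, satisfies $\operatorname{ord}_\infty \leq w/12$. Over $\mathbb{C}$ this is the valence formula, but here I need it for the residue field $\mathcal{O}_K/\mathfrak{P}$; I would deduce it from the geometry of the compactified modular curve $X(1)$ over that field, viewing a weight-$w$ form as a section of $\omega^{\otimes w}$ of degree $w/12$ on $\mathbb{P}^1$, which can have at most $w/12$ zeros counted with multiplicity. Applying this to $\bar H$ in weight $km$: if $a(n,h)\in\mathfrak{P}$ for all $n \leq km/12$ then $\operatorname{ord}_\infty(\bar h) > km/12$, whence $\operatorname{ord}_\infty(\bar H) > km/12$, forcing $\bar H = 0$; since the graded ring of modular forms over the residue field is an integral domain and $\bar h$ occurs as a factor, this yields $\bar h = 0$. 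This is exactly the non-cusp-form bound. For the sharper cusp-form bound $km/12 - (m-1)/N$, I would track the vanishing of $\bar h$ at the other cusps: when $h$ is a cusp form each of the $m-1$ non-identity factors vanishes at the cusp, and in integer powers of $q$ its order is at least $1/w_\gamma \geq 1/N$ (the cusp widths $w_\gamma$ dividing $N$). Thus $\operatorname{ord}_\infty(\bar H) \geq \operatorname{ord}_\infty(\bar h) + (m-1)/N$, which combined with $\operatorname{ord}_\infty(\bar H)\leq km/12$ sharpens the upper bound on $\operatorname{ord}_\infty(\bar h)$ by exactly $(m-1)/N$.

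The main obstacle is the level-one vanishing estimate in positive residue characteristic: the complex valence formula does not transfer automatically, and small characteristics ($p=2,3$) cause genuine trouble at the elliptic points of $X(1)$, where $\omega$ carries a fractional ``degree.'' Handling these cleanly---either through the moduli-theoretic interpretation of $\omega^{\otimes w}$ and a careful degree computation on the coarse space, or via the alternative route of showing that a level-one integral form whose first $\lfloor w/12\rfloor + 1$ coefficients lie in $\mathfrak{P}$ must reduce to $0$, using the explicit integral structure of $M_*(\SL_2(\Z))$ generated by $E_4$, $E_6$, $\Delta$ together with the integrality of their $q$-expansions---is where the real work lies. The reduction to level one via the norm and the final counting argument are then comparatively routine.
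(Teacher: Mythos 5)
The paper does not prove this statement: it is quoted as Sturm's criterion with pointers to Sturm \cite{sturm} and Stein \cite[p.\,173]{stein}, so there is no internal proof to compare against; your sketch should be measured against Sturm's original argument, and in substance it \emph{is} that argument. Setting $h=f-g$, passing to level one via the norm $H=\prod_{\gamma\in\Gamma\backslash\SL_2(\Z)}h|_k\gamma$ of weight $km$, bounding $\mathrm{ord}_\infty$ of a nonzero level-one form modulo $\mathfrak{P}$ by $w/12$, and harvesting the extra $(m-1)/N$ from the positive order (at least $1/w_\gamma\geq 1/N$, widths dividing $N$) of each non-identity factor when $h$ is cuspidal --- this is exactly the classical proof. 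Your fallback for the level-one estimate is also the standard and correct one: the triangular integral basis $E_4^aE_6^b\Delta^c$ of $M_w(\SL_2(\Z),\Z)$, whose $q$-expansions begin $q^c+\cdots$ with unit leading coefficient, shows that $a(n,h)\in\mathfrak{P}$ for all $n\leq \lfloor w/12\rfloor$ forces every basis coefficient into $\mathfrak{P}$; this sidesteps the characteristic $2,3$ elliptic-point problems that make the geometric route via $\omega^{\otimes w}$ on $X(1)$ delicate, problems you rightly flag.

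One gap is worth naming concretely. When the residue characteristic $p$ of $\mathfrak{P}$ divides $N$, the $q$-expansions of the conjugates $h|_k\gamma$ genuinely acquire denominators dividing a power of $N$, so your step ``verify that the expansion of each factor is $\mathfrak{P}$-integral'' is not a verification but an obstruction: the norm argument as you set it up requires $p\nmid N$, which is the hypothesis in Sturm's original theorem, whereas the statement as printed carries no coprimality condition (extensions to $p\mid N$ exist but need additional input beyond this argument). Relatedly, before applying the valence bound to $\bar H$ you must exclude the possibility that some non-identity factor vanishes identically modulo the extended prime while $\bar h\neq 0$; your remark that $\bar h$ ``occurs as a factor'' in the domain $\overline{\F}_p[[q^{1/N}]]$ does handle this, but recovering $\bar h=0$ from $\overline{h|_k\gamma}=0$ again uses $p\nmid N$ to invert the slash action integrally. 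With the coprimality hypothesis added (or with a supplementary argument for $p\mid N$), your outline is a correct reconstruction of the cited proof.
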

The bound for $n$ in the above theorem is called the Sturm bound for $\Gamma$ and weight $k$.

\medskip

\section{Examples}

\subsection{$S_{24}(\SL_2(\mathbb{Z}))\times S_{12}(\SL_2(\mathbb{Z}))$}
\label{section: weight-24-x-12}

 The dimension of  $S_{24}(\SL_2(\mathbb{Z}))$ is $2$. There are no old forms as the level is $1$. The two distinct newforms, say, ${f_1}$ and ${f_2}$ have 
 Fourier expansions:
\begin{align*}
		f_1 &= q + (\beta_{0})q^2 + O(q^3),\\
		f_2 &= q + (-\beta_{0} + 1080)q^2 + O(q^3),
\end{align*}
where $\beta_0 = 12 \sqrt{144169}$. The coefficients are in the number field $\mathbb{Q}(\beta_0).$ 
The prime number $144169$ is ramified in the number field $\mathbb{Q}(\beta_0)$. 
Let the prime ideal lying above $144169$ be $\mathfrak{P}_{144169}$. 
Also, $f_{1\rho} = {f_1}^\rho = {f_1}$ and $f_{2\rho} = {f_2}^\rho = {f_2}$ as the coefficients are totally real.
Their difference is
\begin{align*}
f_1 -f_2 = (2\beta_0 - 1080)q^2 + O(q^3) .
\end{align*}
The ideal factorization of $(2\beta_0 - 1080)$ in the number field $\mathbb{Q}(\beta_0)$ is
\begin{multline*}
 (\left(2, \frac{1}{24} \beta_{0} - 23\right))^{3} \cdot (\left(2, \frac{1}{24} \beta_{0} - 22\right))^{3} \cdot (\left(3, \frac{1}{24} \beta_{0} - 23\right)) \\ \cdot (\left(3, \frac{1}{24} \beta_{0} - 22\right)) \cdot (\left(144169, \frac{1}{24} \beta_{0} + 72062\right)).
\end{multline*}
Therefore, since the Sturm bound is $2$ for $S_{24}(\SL_2(\mathbb{Z}))$, by Theorem \ref{corollary: sturm-bound} one has: 
\begin{equation*}
		f_1 \equiv f_2\pmod{\mathfrak{P}_{144169}}.
\end{equation*}

\medskip

 The space $S_l(\Gamma_0(N)) = S_{12}(\SL_2(\mathbb{Z}))$ is of dimension $1$ 
 and generated by the unique Ramanujan cusp form $\Delta$. Put $g = \Delta.$ The Fourier expansion of $g$ is $q - 24q^2 +O(q^3)$. 
 
 \medskip

\begin{theorem} 
\label{thm:sl4zxsl2z}
For $f_1,f_2 \in S_{24}(\SL_2(\mathbb{Z}))$ and $g \in S_{12}(\SL_2(\mathbb{Z}))$ as above, and any integer $m$ with  
$18 \leq m \leq 22$, one has:
	\begin{align*}
		\frac{{L}(m,f_1\times g)}{{L}(m+1,f_1\times g)}\equiv \frac{{L}(m,f_2 \times g)}{{L}(m+1,f_2\times g)} \pmod{ \mathfrak{P}_{144169}}. 
	\end{align*}
\end{theorem}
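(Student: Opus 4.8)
The plan is to strip off the archimedean and abelian factors, which are identical for $f_1$ and $f_2$, and reduce the assertion to a congruence between the ratios of the naive convolutions $D(s,f_i\times g)$, which the algorithm of \ref{subsection: algorith-gl2-x-gl2} computes explicitly. First I would invoke \eqref{eqn:abelian-infinity} together with \eqref{eqn: infinite-and-abelian-L-function}: since $f_1,f_2\in S_{24}(\SL_2(\Z))$ share weight $24$, level $1$, and trivial nebentypus, the product $L_\infty(s,f_i\times g)\,L_N(2s+2-k-l,\chi\psi)$ is literally the same for $i=1,2$, so for each critical $m$ there is a constant $C_m$, independent of $i$, with
$$
\frac{L(m,f_i\times g)}{L(m+1,f_i\times g)} \ = \ C_m\cdot \frac{D(m,f_i\times g)}{D(m+1,f_i\times g)}.
$$
Using $L_\infty(s,f\times g)=(2\pi)^{-2s}\Gamma(s)\Gamma(s-11)$ one gets $L_\infty(m)/L_\infty(m+1)=4\pi^2/\bigl(m(m-11)\bigr)$, while the abelian ratio $\zeta(2m-34)/\zeta(2m-32)$ is a rational multiple of $(2\pi)^{-2}$ by the Euler formula, equivalently by \eqref{eqn:ratio-of-dirichlet} with $N=1$ and $\omega$ trivial. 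Hence $C_m$ is a rational number assembled from $m$, $m-11$, and a ratio $B_{2m-34}/B_{2m-32}$ with $2m-34\in\{2,\dots,10\}$. Since $144169$ is vastly larger than every integer entering $C_m$ and every prime dividing the small Bernoulli numbers $B_2,\dots,B_{12}$ (the largest such prime being the numerator $691$ of $B_{12}$), one checks at once that $v_{\mathfrak{P}_{144169}}(C_m)=0$, so $C_m$ is a $\mathfrak{P}_{144169}$-unit and the claim is equivalent to the congruence of the two $D$-ratios.

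Next I would run the algorithm of \ref{subsection: algorith-gl2-x-gl2}. A useful observation is that for fixed $m$ (hence fixed $r=23-m$ and $\lambda=12-2r$) the holomorphic projection $h_0$ of $g\,\delta_\lambda^{(r)}E^*_{\lambda,1}(z)$ produced via Lemma \ref{lemma:h-recursive-relations} depends only on $g=\Delta$ and the weights, not on which $f_i$ one chooses; the sole $f_i$-dependence enters in Step 4, where $h_0=\alpha_0^{(i)}f_i+\alpha_1^{(i)}f_{3-i}$ is expanded in the eigenbasis $\{f_1,f_2\}$ of the two-dimensional space $S_{24}(\SL_2(\Z))$. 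Thus one expansion of $h_0$ per $m$ yields both coefficients, and Step 5 returns $D(m,f_i\times g)/(\pi^{24}\langle f_i,f_i\rangle)=c_r\alpha_0^{(i)}$; the transcendental period $\pi^{24}\langle f_i,f_i\rangle$ and the rational $c_r$ cancel in the $D$-ratio, leaving an explicit element of $\Q(\beta_0)=\Q(\sqrt{144169})$.

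Finally I would verify the $D$-ratio congruence, and here there is a structural shortcut worth isolating. Because $f_2=f_1^\sigma$ for the nontrivial $\sigma\in\mathrm{Gal}(\Q(\sqrt{144169})/\Q)$ and $g=\Delta$ has rational coefficients so $g^\sigma=g$, the Galois equivariance in Theorem \ref{thm:main-theorem} gives $\sigma\bigl(D(m,f_1\times g)/(\pi^{24}\langle f_1,f_1\rangle)\bigr)=D(m,f_2\times g)/(\pi^{24}\langle f_2,f_2\rangle)$, so the second $D$-ratio is exactly the $\sigma$-conjugate of the first. As $144169$ ramifies in $\Q(\sqrt{144169})$, the decomposition group at $\mathfrak{P}_{144169}$ equals its inertia group, whence $\sigma$ acts trivially on the residue field $\mathcal{O}/\mathfrak{P}_{144169}\cong\F_{144169}$; therefore $x\equiv\sigma(x)\pmod{\mathfrak{P}_{144169}}$ for every $\mathfrak{P}_{144169}$-integral $x$. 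Writing the first $D$-ratio as $u+v\sqrt{144169}$ with $u,v\in\Q$, the difference of the two ratios is $2v\sqrt{144169}$, of valuation $2\,v_{144169}(v)+1$, which is positive precisely when $v$ is $144169$-integral.

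The main obstacle is therefore the explicit computation in the middle step: the recursion of Lemma \ref{lemma:h-recursive-relations} must be carried out over $\Q(\beta_0)$ for each of the five values of $m$, and through it one must confirm that the irrational part $v$ of each $D$-ratio is $144169$-integral (equivalently, that the computed difference of $L$-ratios has positive $\mathfrak{P}_{144169}$-valuation). The congruence $f_1\equiv f_2\pmod{\mathfrak{P}_{144169}}$ already established via Theorem \ref{corollary: sturm-bound} makes this integrality highly plausible, but it is exactly the point the SAGE computation is needed to certify.
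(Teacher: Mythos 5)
Your proposal is correct, and its skeleton is the paper's own proof: fix the eigenbasis $\{f_1,f_2\}$, for each $m$ (with $r=23-m$, $\lambda=12-2r$, including the non-holomorphic case $E^*_{2,1}$ at $m=18$) compute the holomorphic projection $h_0$ via Lemma \ref{lemma:h-recursive-relations}, expand $h_0$ in the eigenbasis to extract $D(m,f_i\times g)/(\pi^{24}\langle f_i,f_i\rangle)$, strip the common archimedean and abelian factors using \eqref{eqn: infinite-and-abelian-L-function}, and certify the congruence by a $\mathfrak{P}_{144169}$-valuation computation in SAGE. Your observations that $h_0$ is independent of $i$ and that the period $\pi^{24}\langle f_i,f_i\rangle$ cancels in each ratio are exactly how the paper's computation is organized.

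Where you genuinely depart from the paper is the endgame. The paper computes both coefficients $\alpha_0^{(1)},\alpha_0^{(2)}$, forms the difference of the two completed $L$-ratios, and factors the resulting ideal in $\Q(\beta_0)$, reading off the factor $\left(144169,\tfrac{1}{24}\beta_0+72062\right)$ with exponent $1$ for each $m$. You instead invoke the Galois equivariance in Theorem \ref{thm:main-theorem} (valid here since $17<m<m+1\leq 23<24$, $f_2=f_1^\sigma$, $g^\sigma=g$) together with the ramification of $144169$ in $\Q(\beta_0)$ to conclude that the second $D$-ratio is the $\sigma$-conjugate of the first, so the difference is $2v\sqrt{144169}$ with $\mathfrak{P}$-valuation $2v_{144169}(v)+1$. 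This buys you three things the paper does not state: only one conjugate need be computed; an a priori explanation of why the ramified prime appears to exponent exactly $1$ in the paper's factorizations (equivalently $v$ is a $144169$-unit there); and a crisp formulation of the one fact the machine must certify, namely $\mathfrak{P}$-integrality of the irrational part $v$. The trade-off is that your shortcut is special to Galois-conjugate pairs: in Section \ref{section: weight-13-x-6} the congruent forms $f_1,f_2$ are \emph{not} conjugate, so the paper's direct factorize-the-difference method is the one that generalizes across the examples.

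Two small repairs. First, $c_r$ does not ``cancel in the $D$-ratio'': the critical points $m$ and $m+1$ use $r$ and $r-1$ respectively, so the ratio of $D$-values carries the factor $c_r/c_{r-1}$. This is harmless --- it is a rational number built from small factorials, powers of $2$, and the factor $4^{k-1}N\prod_{p\mid N}(1+p^{-1})/3$, hence a $\mathfrak{P}_{144169}$-unit --- but it must be said, since your reduction to the $\alpha_0$-ratios needs its valuation to vanish, just as your first step needs $v_{\mathfrak{P}}(C_m)=0$. Second, a cosmetic point in your favor: your abelian bookkeeping $\zeta(2m-34)/\zeta(2m-32)$, coming from $2s+2-k-l$ in \eqref{eqn:abelian-infinity}, is the correct one (the displayed evaluations in the paper's subsection on ratios contain label typos, e.g.\ the value quoted for $\zeta(10)$ is that of $\zeta(12)$), and with it your conclusion that every prime involved in $C_m$ is at most $691$, hence prime to $144169$, stands. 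With these repairs your argument is complete modulo the same computational certificate the paper itself relies on, so there is no gap.
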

This is verified using the algorithm given in Section \ref{subsection: algorith-gl2-x-gl2}.

\medskip
\subsubsection{Values of $D(23,f_1\times g)$ and $D(23,f_2\times g)$}

As \textit{Step 1}, fix the basis $\mathcal{B} = \{ f_1, f_2\}$ of $S_{24}(\SL_2(\mathbb{Z}))$ consisting of newforms. Proceed to \textit{Step 2}.
The critical point is $23.$ So put $r=0$ and $\lambda = k - l - 2r = 12$. The Eisenstein series to be considered is $E_{\lambda, N}^*(z,1\!\!1) = E_{12,1}^*(z,1\!\!1).$  The space of Eisenstein series of weight $12$ and trivial character is of dimension $1$. The $q$-expansion of $E_{12,1}^*$ is
\begin{align*}
		E_{12,1}^*(z,1\!\!1) &= 1 + \frac{65520}{691} q + \frac{134250480}{691} q^{2} + \frac{11606736960}{691} q^{3} + O(q^{4}).
    \end{align*}
Furthermore, 
    \begin{equation*}
		g.\delta_{12}^{(0)}E_{12,1}^*(z,1\!\!1) = g.E_{12,1}^*(z,1\!\!1) = q + \frac{48936}{691} q^{2} + \frac{132852132}{691} q^{3} + O(q^{4}).
	\end{equation*}
Also, $g.E_{12,1}^*(z,1\!\!1)$ is a modular form (in fact, a cusp form) of weight $24$. So there is no need to take the holomorphic projection. 
Proceed to $\textit{Step 4}.$ In terms of the basis $\mathcal{B} = \{ f_1, f_2\}$
\begin{equation*}
		g.E_{12,1}^*(z,1\!\!1) = \left(-\frac{27017}{2390898696} \beta_{0} + \frac{50418272}{99620779}\right)f_1 + \left(\frac{27017}{2390898696} \beta_{0} + \frac{49202507}{99620779}\right)f_2 .
\end{equation*}
For \textit{Step 5} one gets
\begin{equation*}
		\frac{D(23,f_1\times g)}{\pi^{24} \langle f_1, f_1 \rangle} = c_0\frac{\langle {f_1}^\rho, g\delta_{\lambda}^{(r)}E_{\lambda, N}^* \rangle}{\pi^{24} \langle f_1, f_1 \rangle} =c_0 \left(-\frac{27017}{2390898696} \beta_{0} + \frac{50418272}{99620779}\right).
\end{equation*}
Similarly, for the pair $(f_2,g)$ one gets
\begin{equation*}
	\frac{D(23,f_2\times g)}{\pi^{24} \langle f_2, f_2 \rangle} = c_0\frac{\langle {f_2}^\rho, g\delta_{\lambda}^{(r)}E_{\lambda, N}^* \rangle}{\pi^{24} \langle f_2, f_2 \rangle}                       =c_0\left(\frac{27017}{2390898696} \beta_{0} + \frac{49202507}{99620779}\right) .
\end{equation*}

\medskip
\subsubsection{Values of $D(22,f_1\times g)$ and $D(22,f_2\times g)$}

As the basis is already fixed proceed to $\textit{Step 2}.$
For the critical point $22$ put $r=1$ and so $\lambda = k-l-2r = 10$. So the Eisenstein Series is $E_{\lambda, N}^*(z,1\!\!1) =  E_{10,1}^*(z,1\!\!1) $ and its $q$-expansion is
\begin{align*}
		E_{10,1}^*(z,1\!\!1) = 1 - 264 q - 135432 q^{2} - 5196576 q^{3} - 69341448 q^{4} - 515625264 q^{5} + O(q^{6}) .
\end{align*}
Proceeding to \textit{Step 4}, by Lemma \ref{lemma: hol-projection}, there are unique $h_0$ and $h_1$ sastisfying
\begin{align*}
		g\delta_{10}^{(1)}E_{10,1}^*(z,1\!\!1) = h_0 + \delta_{22}^{(1)}h_1 .
\end{align*}
Using the recursive relations in Lemma \ref{lemma:h-recursive-relations} calculate the holomorphic projection $h_0:$ 
\begin{align*}
		h_0 &= -\frac{5}{11} q - \frac{24}{11} q^{2} - \frac{977148}{11} q^{3} + O(q^{4})\\
        &= \left(\frac{223}{38060616} \beta_{0} - \frac{365440}{1585859}\right) f_1 + \left(-\frac{223}{38060616} \beta_{0} - \frac{355405}{1585859}\right) f_2 .
\end{align*}
Proceeding to the last \textit{Step 6}, 
\begin{align*}
\frac{D(22, f_1 \times g)}{\pi^{24}\langle f_1, f_1 \rangle} = \frac{\langle f_1^\rho, \delta_{10}^{(1)}E_{10,1}^* \rangle }{\pi^{24}\langle f_1, f_1 \rangle} = \frac{\langle f_1, h_0\rangle}{\pi^{24}\langle f_1, f_1 \rangle} = c_1 \left(\frac{223}{38060616} \beta_{0} - \frac{365440}{1585859}\right).
\end{align*}
Similarly,
\begin{equation*}
	\frac{D(22, f_2 \times g)}{\pi^{24}\langle f_2, f_2 \rangle} = c_1 \left(-\frac{223}{38060616} \beta_{0} - \frac{355405}{1585859}\right).
\end{equation*}

\medskip
\subsubsection{Ratios of successive critical values}
Since $N=1$ and $\chi$ and $\psi$ are trivial, we have $L_N(s, \chi \psi) = \zeta(s)$, the Riemann Zeta function. It is well known that: 
$$\zeta(2 \cdot 23-k-l) = \zeta(10) = \frac{691}{638512875} \, \pi^{12}, \quad \zeta(2\cdot 22 +2  - k - l ) = \zeta(8) = \frac{1}{93555} \, \pi^{10}.$$ 
Therefore, for $i=1,2$
\begin{equation*}
	\frac{L_\infty(22, f_i\times g)\zeta(8)}{L_\infty(23, f_i \times g) \zeta(10)} =  \frac{13650}{83611}.
	\end{equation*}
The ratios of the special values of the completed $L$-functions are
\begin{equation*}
	\frac{L(22, f_1\times g)}{L(23, f_1\times g)} = -\frac{1}{1905750} \beta_{0} + \frac{51866}{317625} \quad \text{ and } \quad 
	\frac{L(22,f_1 \times g)}{L(23, f_2 \times g)} = \frac{1}{1905750} \beta_{0} + \frac{51686}{317625}.
	\end{equation*}
The ideal factorization of the quantity $\frac{L(22,f_1\times g)}{L(23,f_1\times g)} - \frac{L(22,f_2\times g)}{L(23,f_2\times g)}$ in the number field $\mathbb{Q}(\beta_0)$ is
\begin{multline*}
(\left(2, \frac{1}{24} \beta_{0} - 23\right))^{2} \cdot (\left(2, \frac{1}{24} \beta_{0} - 22\right))^{2} \cdot (\left(3, \frac{1}{24} \beta_{0} - 23\right))^{-1} \cdot (\left(3, \frac{1}{24} \beta_{0} - 22\right))^{-1} \cdot \\ (\left(5, \frac{1}{24} \beta_{0} - 21\right))^{-3} \cdot (\left(5, \frac{1}{24} \beta_{0} - 19\right))^{-3} \cdot (\left(7, \frac{1}{24} \beta_{0} - 20\right))^{-1} \cdot (\left(7, \frac{1}{24} \beta_{0} - 18\right))^{-1} \cdot \\ (\left(11, \frac{1}{24} \beta_{0} - 20\right))^{-2} \cdot (\left(11, \frac{1}{24} \beta_{0} - 14\right))^{-2} \cdot (\left(144169, \frac{1}{24} \beta_{0} + 72062\right)).
\end{multline*}
Hence 
\begin{align*}
\frac{L(22,f_1\times g)}{L(23,f_1\times g)} \equiv \frac{L(22,f_2\times g)}{L(23,f_2\times g)} \pmod{\mathfrak{P}_{144169}}.
\end{align*}

\subsubsection{Other ratios}
For $m = 21,20,19$, the ideal factorizations of 
$$\frac{{L}(m,f_1\times g)}{{L}(m+1,f_1\times g)} - \frac{{L}(m,f_2\times g)}{{L}(m+1,f_2\times g)}
$$ 
in the number field $\mathbb{Q}(\beta_0)$ are stated below. The method of computation is along the same lines as in the previous sub-sections.

\begin{multline*}
\frac{L(21,f_1\times g)}{L(22,f_1\times g)} - \frac{L(21,f_2\times g)}{L(22,f_2\times g)} \\= 
(\left(2, \frac{1}{24} \beta_{0} - 23\right))^{-1} \cdot (\left(2, \frac{1}{24} \beta_{0} - 22\right))^{-1} \cdot (\left(3, \frac{1}{24} \beta_{0} - 23\right))^{-3} \cdot (\left(3, \frac{1}{24} \beta_{0} - 22\right))^{-3} \cdot \\ (\left(5, \frac{1}{24} \beta_{0} - 21\right))^{-1} \cdot  (\left(5, \frac{1}{24} \beta_{0} - 19\right))^{-1} \cdot  (\left(7, \frac{1}{24} \beta_{0} - 20\right))^{-1} \cdot (\left(7, \frac{1}{24} \beta_{0} - 18\right))^{-1} \cdot \\ (\left(17, \frac{1}{24} \beta_{0} - 21\right))^{-1} \cdot  (\left(17, \frac{1}{24} \beta_{0} - 7\right))^{-1} \cdot (\left(144169, \frac{1}{24} \beta_{0} + 72062\right)),
\end{multline*}

\begin{multline*}
\frac{L(20,f_1\times g)}{L(21,f_1\times g)} - \frac{L(20,f_2\times g)}{L(21,f_2\times g)} \\=(\left(2, \frac{1}{24} \beta_{0} - 23\right))^{-3} \cdot (\left(2, \frac{1}{24} \beta_{0} - 22\right))^{-3} \cdot (\left(3, \frac{1}{24} \beta_{0} - 23\right))^{-1} \cdot (\left(3, \frac{1}{24} \beta_{0} - 22\right))^{-1} \cdot \\ (\left(5, \frac{1}{24} \beta_{0} - 21\right))^{-1} \cdot (\left(5, \frac{1}{24} \beta_{0} - 19\right))^{-1} \cdot (\left(103, \frac{1}{24} \beta_{0} + 18\right))^{-1} \cdot  (\left(103, \frac{1}{24} \beta_{0} + 40\right))^{-1} \cdot \\ (\left(144169, \frac{1}{24} \beta_{0} + 72062\right)) ,
\end{multline*}
and, 
\begin{multline*}
\frac{L(19,f_1\times g)}{L(20,f_1\times g)} - \frac{L(19,f_2\times g)}{L(20,f_2\times g)} \\= (\left(2, \frac{1}{24} \beta_{0} - 23\right))^{-1} \cdot (\left(2, \frac{1}{24} \beta_{0} - 22\right))^{-1} \cdot (\left(7, \frac{1}{24} \beta_{0} - 20\right))^{-1} \cdot  (\left(7, \frac{1}{24} \beta_{0} - 18\right))^{-1} \cdot \\ (\left(17, \frac{1}{24} \beta_{0} - 21\right))^{-1} \cdot (\left(17, \frac{1}{24} \beta_{0} - 7\right))^{-1} \cdot (\left(19, \frac{1}{24} \beta_{0} - 15\right))^{-1} \cdot  (\left(19, \frac{1}{24} \beta_{0} - 11\right))^{-1} \cdot \\ (\left(144169, \frac{1}{24} \beta_{0} + 72062\right)).
\end{multline*}

\subsubsection[]{The critical point $m=18$}
When the critical point under consideration is $m=18,$
the Eisenstein series is no longer holomorphic.  So $r= 5$ and $\lambda = 24 -12 - 2r = 2$; 
the characters of $f,f'$ and $g$ are all trivial. The correct Eisenstein series is a non-holomorphic one:
\begin{equation*}
	E_{2,1}^*(z, 1\!\!1) = \frac{-12}{4 \pi y} + 1 -24 q - 72 q ^2 - \cdots.
\end{equation*}
See Miyake \cite[Chapter 7]{miyake}. By using  Lemma \ref{lemma:h-recursive-relations} the holomorphic projection $h_0$ is
\begin{align*}
	h_0 &= -\frac{5}{24871} q + \frac{10536}{24871} q^{2} - \frac{212004}{3553} q^{3} + O(q^{4})\\
	&= \left( \frac{1103}{86055052776} \beta_{0} - \frac{385240}{3585627199}\right)f_1 + \left( -\frac{1103}{86055052776} \beta_{0} - \frac{335605}{3585627199}\right)f_2 .
\end{align*}
A similar calculation like in the previous sub-sections yield the ideal factorization of
\begin{multline*}
	\frac{L(18,f_1\times g)}{{L}(19,f_1\times g)} - \frac{L(18,f_2\times g)}{L(19,f_2\times g)} \\= (\left(3, \frac{1}{24} \beta_{0} - 23\right))^{-1} \cdot (\left(3, \frac{1}{24} \beta_{0} - 22\right))^{-1} \cdot (\left(5, \frac{1}{24} \beta_{0} - 21\right))^{-1} \cdot  (\left(5, \frac{1}{24} \beta_{0} - 19\right))^{-1} \cdot \\ (\left(7, \frac{1}{24} \beta_{0} - 20\right))^{-1} \cdot  (\left(7, \frac{1}{24} \beta_{0} - 18\right))^{-1} \cdot  (\left(17, \frac{1}{24} \beta_{0} - 21\right))^{-1} \cdot  (\left(17, \frac{1}{24} \beta_{0} - 7\right))^{-1} \cdot \\ (\left(144169, \frac{1}{24} \beta_{0} + 72062\right)) .
\end{multline*}
This completes the verification of Thm.\,\ref{thm:sl4zxsl2z} for all successive ratios of critical points to the right of the line of symmetry.

\medskip
\subsection{$S_{30}(\SL_2(\mathbb{Z}))\times S_{12}(\SL_2(\mathbb{Z}))$}
\label{section: weight-30-x-12}

The space $S_{30}(\SL_2(\mathbb{Z}))$ is again of dimension $2$. 
Call the newforms $f_1$ and $f_2$. The form $f_1$ is a Galois conjugte of $f_2$. 
The Fourier coefficients of $f_1$ and $f_2$ lie in the number field $\mathbb{Q}(\beta_1)$ where $\beta_1$ satisfies the polynomial $x^2 - 8640 x - 454569984.$ 
The prime $51349$ is ramified in $\mathbb{Q}(\beta_1)$. Let $\mathfrak{P}_{51349}$ be the prime ideal lying above $51349$. 
Like in \ref{section: weight-24-x-12}, one verifies that $f_1 \equiv f_2 \pmod{\mathfrak{P}_{51349}}.$
Take $g = \Delta$ to be Ramanuajan cusp form which is of weight $12$ and level $1$. Then exactly like in the 
\ref{section: weight-24-x-12} we get:

\begin{theorem} For $f_1,f_2 \in S_{30}(\SL_2(\mathbb{Z}))$ and $g \in S_{12}(\SL_2(\mathbb{Z}))$ as above, and for an integer $m$ with 
$21 \leq m \leq 28$, one has:
	\begin{equation*}
		\frac{L(m,f_1 \times g)}{L(m+1, f_1 \times g)} \equiv \frac{L(m,f_2 \times g)}{L(m+1, f_2\times g)} \pmod{ \mathfrak{P}_{51349}}. 
	\end{equation*}
\end{theorem}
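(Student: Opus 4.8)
The plan is to verify the stated congruence by direct computation, following verbatim the strategy of Section~\ref{section: weight-24-x-12} and Theorem~\ref{thm:sl4zxsl2z}, only with the weight of the large form raised from $24$ to $30$. Here $k=30$, $l=12$, $N=1$, and all nebentype characters are trivial, so the line of symmetry of the functional equation sits at $\mathrm{Re}(s)=(k+l-1)/2=20.5$ and the critical set is $\{m\in\Z : 12\le m\le 29\}$. The critical points strictly to the right of the line are $m=21,22,\dots,29$, and for a successive ratio $L(m,\,\cdot\,)/L(m+1,\,\cdot\,)$ to involve only such points one needs $21\le m\le 28$, which is exactly the range in the statement. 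First I would record, via Sturm's criterion (Theorem~\ref{corollary: sturm-bound}) at the Sturm bound $2$ for weight $30$ and level $1$, the congruence $f_1\equiv f_2\pmod{\mathfrak{P}_{51349}}$; this is the hypothesis driving everything, and since $a(1,f_1)=a(1,f_2)=1$ it is checked by factoring the ideal generated by the single coefficient $a(2,f_1)-a(2,f_2)$ in $\Q(\beta_1)$ and observing that $\mathfrak{P}_{51349}$ divides it.

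For each critical point $m\in\{21,\dots,29\}$ I would run the algorithm of Section~\ref{subsection: algorith-gl2-x-gl2} to produce the algebraic number $D(m,f_i\times g)/\bigl(\pi^{30}\langle f_i,f_i\rangle\bigr)$ for $i=1,2$. Concretely, set $r=k-1-m=29-m$ and $\lambda=k-l-2r=18-2r$, and select the weight-$\lambda$ level-$1$ Eisenstein series $E^*_{\lambda,1}(z,1\!\!1)$. One then computes the holomorphic projection $h_0$ of $g\cdot\delta^{(r)}_\lambda E^*_{\lambda,1}$ using the recursion of Lemma~\ref{lemma:h-recursive-relations}, expands $h_0$ in the eigenbasis $\{f_i,\,f_{3-i}\}$ of $S_{30}(\SL_2(\Z))$, and reads off the coefficient $\alpha_0$ of $f_i$; by Lemma~\ref{lemma: zero-lemma}, Lemma~\ref{lemma: fgdelta_fh0}, and Theorem~\ref{thm: l-value-and-petersson-inner-product} the sought value is $c_r\alpha_0$, an element of $\Q(\beta_1)$. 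Since the coefficients of $f_1,f_2$ are totally real, $f_{i\rho}=f_i^\rho=f_i$, so the orthogonality $\langle f_i,f_{3-i}\rangle=0$ isolates $\alpha_0$ cleanly.

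Next I would pass from $D$-values to completed $L$-values through \eqref{eqn:abelian-infinity}, writing $L(s,f_i\times g)=L_\infty(s,f_i\times g)\,\zeta(2s+2-k-l)\,D(s,f_i\times g)$ with $L_\infty(s,f_i\times g)=(2\pi)^{-2s}\Gamma(s)\Gamma(s-11)$ and $L_N(s,1\!\!1)=\zeta(s)$. The point, recorded in \eqref{eqn: infinite-and-abelian-L-function}, is that the archimedean and abelian factors depend only on the weights and characters, hence are identical for $f_1$ and $f_2$; so in
\[
\frac{L(m,f_i\times g)}{L(m+1,f_i\times g)}=\frac{L_\infty(m,f_i\times g)\,\zeta(2m+2-k-l)}{L_\infty(m+1,f_i\times g)\,\zeta(2m+4-k-l)}\cdot\frac{D(m,f_i\times g)}{D(m+1,f_i\times g)}
\]
the first factor is a fixed rational number common to $i=1,2$ (the even zeta values being rational multiples of powers of $\pi$, with these powers cancelling against those in the archimedean ratio), while the factor $\pi^{30}\langle f_i,f_i\rangle$ cancels inside $D(m,f_i\times g)/D(m+1,f_i\times g)$. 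By Theorem~\ref{thm:main-theorem} each ratio is then an explicit element of $\Q(f_i)\Q(g)=\Q(\beta_1)$, computed from the algebraic $D$-values of the previous step.

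Finally, for each $m$ with $21\le m\le 28$ I would form the difference
\[
\frac{L(m,f_1\times g)}{L(m+1,f_1\times g)}-\frac{L(m,f_2\times g)}{L(m+1,f_2\times g)}\in\Q(\beta_1)
\]
and factor it as a fractional ideal, verifying that $\mathfrak{P}_{51349}$ occurs with strictly positive exponent and, crucially, does not occur in the denominator; this is precisely the assertion that the difference has positive $\mathfrak{P}_{51349}$-adic valuation, which is what the congruence means. The main obstacle I anticipate is twofold. First, the holomorphic-projection step grows in length with $r$, reaching $r=8$ at $m=21$, where $\lambda=18-2r=2$ with trivial character, so $E^*_{2,1}(z,1\!\!1)$ is \emph{not} holomorphic at $\infty$ and the recursion of case~(i) of Lemma~\ref{lemma:h-recursive-relations}, with its extra term $h_{r+1}$ arising from $\delta^{(r)}_2(1/4\pi y)$, must be used. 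Second, because these ratios are generally non-integral in $\Q(\beta_1)$, one must control denominators: a congruence could be spoiled if $\mathfrak{P}_{51349}$ appeared in the denominator of one of the ratios, so the decisive check is the sign of the $\mathfrak{P}_{51349}$-exponent in the ideal factorization, exactly as in the $m=21$ (non-holomorphic) and $m=22,\dots,28$ cases mirror the $m=18,\dots,22$ cases of Section~\ref{section: weight-24-x-12}.
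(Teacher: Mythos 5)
Your proposal is correct and follows essentially the same route as the paper, which itself proves this theorem by saying the verification proceeds ``exactly like'' the $S_{24}(\SL_2(\Z))\times S_{12}(\SL_2(\Z))$ case: Sturm's bound to certify $f_1\equiv f_2\pmod{\mathfrak{P}_{51349}}$, the Shimura--Hida algorithm of Section~\ref{subsection: algorith-gl2-x-gl2} for each $D(m,f_i\times g)/\bigl(\pi^{30}\langle f_i,f_i\rangle\bigr)$, cancellation of the common archimedean and zeta factors in the successive ratios, and ideal factorization of the difference in $\Q(\beta_1)$ to check positive $\mathfrak{P}_{51349}$-adic valuation. You also correctly flag the one structural subtlety, namely that at $m=21$ one has $r=8$, $\lambda=2$ with trivial character, so the non-holomorphic $E^*_{2,1}$ and case~(i) of Lemma~\ref{lemma:h-recursive-relations} are needed, mirroring the $m=18$ case of Section~\ref{section: weight-24-x-12}.
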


\medskip
\subsection{$S_{13}(\Gamma_0(3),\chi)\times S_{6}(\Gamma_0(3))$}
\label{section: weight-13-x-6}

Let $\chi$ be the unique quadratic character modulo $3$. All the eigenforms in  $S_{13}(\Gamma_1(3)) = S_{13}(\Gamma_0(3),\chi)$ are newforms. The $q$-expansions of the  newforms are 
\begin{align*}
		f_1 &= q + 729 q^3+ (4096) q^4 + O(q^5), \\
		f_2 &= q + \nu_1 q^2 + (-3 \nu_{1} - 675)q^3 + (-4328)q^4+  O(q^5),\\
		f_3 &= q + -\nu_1 q^2 + (3 \nu_{1} - 675)q^3 + (-4328) q^4+ O(q^5),
\end{align*}
where $\nu_1 = \sqrt{-8424}$. It can checked that $f_2$ and $f_3$ are Galois conjugates of each other, and in fact,  
$f_{2\rho} = f_2^\rho = f_3$ and $f_{3\rho} = f_3^\rho = f_2$ as the coefficients are in the imaginary quadratic field $\Q(\nu_1).$
However, $f_1$ and $f_2$ are \textit{not} Galois conjugate of each other, since the coefficients of $f_1$ are in $\Q$; also, 
$f_{1\rho} = f_1^\rho = f_1,$  
Using the Sturm bound (Thm.\,\ref{corollary: sturm-bound}) one verifies 
\begin{equation*}
f_1 \equiv f_2 \pmod{\mathfrak{P}_{13}},
\end{equation*}
where $\mathfrak{P}_{13} = (\left(13, \frac{1}{18} \nu_1\right))$ is the prime ideal lying above $(13) \subset \mathbb{Z}$ in the number field $\mathbb{Q}(\nu_1).$

\medskip

The space $S_6(\Gamma_0(3))$ is one-dimensional, spanned by $g$ whose fourier expansion is 
$$
g = q - 6q^2 + 9q^3 + 4q^4 + O(q^5).
$$

\subsubsection{Ratios of successive critical values}
\begin{theorem}For $f_1,f_2 \in S_{13}(\Gamma_0(3),\chi)$ and $g\in S_6(\Gamma_0(3))$ as above, and for an integer $m$ with 
$9 \leq m \leq 11$, one has:
	\begin{align*}
		\frac{L(m, f_1\times g)}{L(m+1,f_1\times g)} \equiv \frac{L(m,f_2\times g)}{L(m+1,f_2\times g)}  \pmod{\mathfrak{P}_{13}}.
	\end{align*}
\end{theorem}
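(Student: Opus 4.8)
The plan is to verify the three congruences ($m = 9, 10, 11$) by directly computing each algebraic part $D(m, f_i \times g)/(\pi^{13}\langle f_i, f_i\rangle)$ with the algorithm of Section \ref{subsection: algorith-gl2-x-gl2}, exactly as in Section \ref{section: weight-24-x-12}, and then comparing the resulting ratios modulo $\mathfrak{P}_{13}$. Here $k = 13$, $l = 6$, $N = 3$, and the relevant character is $\omega = \chi\psi = \chi$, the primitive quadratic character modulo $3$, which is odd. For the successive ratio $L(m, f_i\times g)/L(m+1, f_i\times g)$ one needs the critical values at $m$ and $m+1$, so I would compute at all four points $m \in \{9, 10, 11, 12\}$; setting $r = 12 - m$ and $\lambda = k - l - 2r = 7 - 2r$ gives $(r, \lambda) \in \{(0,7),(1,5),(2,3),(3,1)\}$. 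Every $\lambda$ is odd, consistent with $\chi(-1) = -1$, and none equals $2$, so the Eisenstein series $E^*_{\lambda, 3}(z, \chi)$ is always holomorphic and only case (ii) of Lemma \ref{lemma:h-recursive-relations} is used; this is genuinely simpler than the $m = 18$ instance of Section \ref{section: weight-24-x-12}.

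For each $i$ and each of the four points I would run the five steps of the algorithm: extend $f_{i\rho}$ to an eigenbasis of $S_{13}(\Gamma_0(3),\chi)$, form $g\cdot\delta_\lambda^{(r)}E^*_{\lambda,3}(z,\chi)$, extract its holomorphic projection $h_0$ via the recursion of Lemma \ref{lemma:h-recursive-relations}(ii), read off the coefficient $\alpha_0$ of $f_{i\rho}$, and return $c_r\alpha_0$, which equals $D(m, f_i\times g)/(\pi^{13}\langle f_i, f_i\rangle)$ by Lemmas \ref{lemma: fgdelta_fh0}, \ref{lemma: zero-lemma} and Theorem \ref{thm: l-value-and-petersson-inner-product}. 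A point that needs care is the choice of distinguished eigenform: since $f_{1\rho} = f_1$ but $f_{2\rho} = f_3$, the computation attached to $f_2$ must use $f_3$ as the first basis vector, and one reads off the coefficient of $f_3$ in $h_0$.

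To pass from $D$-ratios to completed $L$-ratios I would use \eqref{eqn:abelian-infinity}. By \eqref{eqn: infinite-and-abelian-L-function} the archimedean factor $L_\infty(s, f_i\times g)$ and the abelian factor $L_N(2s+2-k-l, \chi)$ coincide for $i = 1, 2$, so only the ratio of these factors at $m$ versus $m+1$ enters, and it is a single quantity $C_m$ common to both forms; thus the difference of the two completed ratios equals $C_m\bigl(\tfrac{D(m, f_1\times g)}{D(m+1, f_1\times g)} - \tfrac{D(m, f_2\times g)}{D(m+1, f_2\times g)}\bigr)$, in which the factors $\pi^{13}\langle f_i, f_i\rangle$ have already cancelled. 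Because $\chi$ is primitive modulo $3$ one has $L_N(s, \chi) = L(s, \chi)$, and the two Dirichlet arguments $2m-17$ and $2m-15$ differ by $2$ and are both odd, so \eqref{eqn:ratio-of-dirichlet} applies verbatim and expresses their ratio through generalized Bernoulli numbers $B_{n, \bar\chi}$; the $(2\pi)^2$ from the $\Gamma$-factors cancels against the $(N/2\pi)^2$ from the Dirichlet values, leaving $C_m$ a rational number. Multiplying by the algebraic $D$-ratio gives $L(m, f_i\times g)/L(m+1, f_i\times g) \in \mathbb{Q}(\nu_1)$, and I would then factor the ideal generated by the difference in $\mathbb{Q}(\nu_1)$ and confirm that $\mathfrak{P}_{13} = (13, \tfrac{1}{18}\nu_1)$ occurs to a strictly positive power.

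The main obstacle is that, unlike examples (1) and (2), here $f_1$ and $f_2$ are \emph{not} Galois conjugate — the coefficients of $f_1$ lie in $\mathbb{Q}$ while those of $f_2$ lie in $\mathbb{Q}(\nu_1)$ — so there is no Galois-equivariance shortcut, and the congruence is a genuine arithmetic consequence of $f_1 \equiv f_2 \pmod{\mathfrak{P}_{13}}$ visible only through the explicit numbers. The delicate feature is that the residue characteristic of $\mathfrak{P}_{13}$ is $13$, which is exactly the weight $k$: the Fourier coefficients of $E^*_{\lambda, 3}(z, \chi)$, together with the $\Gamma$-quotients in the recursion of Lemma \ref{lemma:h-recursive-relations} and the Bernoulli numbers in \eqref{eqn:ratio-of-dirichlet} and in $C_m$, can carry factors of $13$ in their denominators. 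I would therefore track the $\mathfrak{P}_{13}$-adic valuations of numerators and denominators separately throughout, working in the localization at $\mathfrak{P}_{13}$, so that spurious denominator contributions at $13$ do not mask the genuine congruence and the final valuation is read correctly.
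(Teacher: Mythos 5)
Your proposal follows essentially the same route as the paper: run the Shimura--Hida algorithm of Section \ref{subsection: algorith-gl2-x-gl2} at the four points $m=9,\dots,12$ with $(r,\lambda)\in\{(0,7),(1,5),(2,3),(3,1)\}$ (all holomorphic Eisenstein cases, Lemma \ref{lemma:h-recursive-relations}(ii)), cancel the common archimedean and abelian factors via \eqref{eqn:ratio-of-dirichlet}, and check that $\mathfrak{P}_{13}$ divides the difference of the ratios by factoring the ideal in $\mathbb{Q}(\nu_1)$ --- exactly what the paper does, including its explicit caveats that $f_2^{\rho}=f_3$, $\langle f_3,f_3\rangle=\langle f_2,f_2\rangle$, and that $L_N(s,\chi\psi)$ is the Dirichlet $L$-function of the quadratic character modulo $3$. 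Your added care about $13$-adic denominators (since the residue characteristic equals the weight) is sound but amounts to the same verification the paper performs through the ideal factorization, which for $m=11$ exhibits $(13,\tfrac{1}{18}\nu_1)$ to the first power alongside negative powers of other primes.
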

The ratios of the Rankin-Selberg $L$-values are calculated exactly as in the previous subsections. 
The reader should keep in mind that  $f_2^\rho = f_3, f_3^\rho = f_2$, 
$\langle f_3 , f_3 \rangle = \langle f_3^\rho, f_3^\rho \rangle = \langle f_2, f_2 \rangle$, and 
$L_N(s, \chi \psi) = L(s, \psi)$ is the Dirichlet $L$-function for the unique quadratic character modulo $3$. 
The ideal factorization of the difference of the ratios $\frac{L(m, f_1\times g)}{L(m+1,f_1\times g)} - \frac{L(m,f_2\times g)}{L(m+1,f_2\times g)}$, for $m = 11$, 
in the number field $\mathbb{Q}(\nu_1)$ is given by:
\begin{multline*}
 \frac{L(11, f_1\times g)}{L(12,f_1\times g)} - \frac{L(11,f_2\times g)}{L(12,f_2\times g)} = 
 (\left(2, \frac{1}{18} \nu_{1}\right))^{2} \cdot (\left(3, \frac{1}{18} \nu_{1} + 1\right))^{2} \cdot (\left(3, \frac{1}{18} \nu_{1} + 2\right)) \\ 
 \cdot (\left(5, \frac{1}{18} \nu_{1} + 2\right))^{-1} \cdot  (\left(5, \frac{1}{18} \nu_{1} + 3\right))^{-1} \cdot \left(11\right)^{-1} 
 \cdot (\left(13, \frac{1}{18} \nu_{1}\right)) \cdot (\left(71, \frac{1}{18} \nu_{1} + 20\right))^{-1} .
\end{multline*}
So they are congruent modulo $\mathfrak{P}_{13}$; the same conclusion holds for the other ratios as well.

\medskip
\subsection{$S_{26}(\SL_2(\mathbb{Z})) \times S_{13}(\Gamma_0(3),\psi)$}
\label{section: weight-26-x-13}
Unlike the previous examples, here the cusp form of higher weight $k$ is fixed, and the modular forms of lower weight $l$ vary in a congruence class. 

\medskip

For the weight $k = 26$ cusp form, take $f \in S_{26}(\SL_2(\mathbb{Z}))$ with $q$-expansion 
\begin{equation*}
	f = q - 48 q^2 - 195804 q^3 - 33552128 q^4 - 741989850 q^5 + 9398592  q^6 + O(q^7).
\end{equation*}
and view it as an element of $S_{26}(\Gamma_1(3)).$

\medskip

 We take two cusp forms of weight $13$ and level $3$. 
 In order to have consistency of notations call the forms $f_1$ and $f_2$ from \ref{section: weight-13-x-6} as $g_1$ and $g_2$ in $S_{13}(\Gamma_0(3),\psi)$, respectively, 
where $\psi$ is the non-trivial primitive character modulo $3$.  
 It is already established that  $g_1 \equiv g_2 \pmod{ \mathfrak{P}_{13}},$ where $\mathfrak{P}_{13}$ is the prime ideal lying above 
 $(13) \subset \mathbb{Z}$ in $\mathbb{Q}(\nu_1),$ where $\nu_1^2 = -8424$.

\medskip

 As a Hecke module, the $7$-dimensional space $S_{26}(\Gamma_1(3))$ decomposes into a sum of: 
\begin{enumerate}
	\item a $2$-dimensional subspace with a basis of the old forms $f$ and $\hat{f}:=\langle 3 \rangle \cdot f = f(3z),$
	\item a $2$-dimensional subspace with a basis consisting of a newform form and its nontrivial Galois conjugate,
	\item a $3$-dimensional subspace with a basis consisting of a newform form and its two distinct Galois conjugates.
\end{enumerate} 
This data is obtained from the $L$-functions and modular forms database LMFDB \cite{lmfdb}. 
As there are in total $5$ newforms, finding an extension of $\mathbb{Q}(\nu_1)$ containing all the Fourier coefficients of such newforms is computationally taxing. 
Instead, as a work around, we compute a basis consisting of Fourier coefficients in $\mathbb{Q},$ for the space in $(2)$ and for the space in $(3).$ 
This can be achieved in \href{www.sagemath.org}{SAGE} by applying the \texttt{decomposition()} command on any Hecke operator away from the level.

If $h_0$ is the holomorphic projection and $h_0 = \alpha_0 f + \alpha_1 \hat{f} + \dots \in S_{26}(\Gamma_1(3))$ then 
it is \textit{no longer true} that 
$\langle f, h_0 \rangle = c_0 \langle f, f \rangle,$ since $\langle f, \hat{f} \rangle \neq 0.$ Hence $c_r \alpha_0$ is {\it not} the correct $L$-value. 
So while returning the $L$-value this has to be taken into account. The relation between $\langle f, f \rangle $ and $\langle f, \hat{f} \rangle$ is as follows: 
Suppose $f \in S_{k}(\Gamma_1(N))$ be a newform. Let $a(p,f)$ be its coeffecient at $p$. In $S_k(\Gamma_0(Np)),$ taking $\hat{f}(z) = f(pz)$ one has: 
\begin{equation*}
	\langle \hat{f} ,f \rangle_{\Gamma_0(Np)} 
	= \langle f(pz), f(z) \rangle_{\Gamma_0(Np)} 
	= \langle f(z),f(pz) \rangle_{\Gamma_0(Np)} 
	= \frac{p^{-k+1}a(p,f)}{1+p} \langle f,f\rangle_{\Gamma_0(Np)}. 
\end{equation*}
See Bella\"{i}che \cite[p.\,284]{joel}.

\subsubsection{Ratios of special values}
In this example, we verify the congruence in all cases except one recorded in the following theorem:

\begin{theorem}
\label{thm:exceptional-example}
	For $f \in S_{26}(\SL_2(\mathbb{Z}))$ and $g_1, g_2 \in S_{13}(\Gamma_0(3),\psi)$ as above, and an integer $m$ with $19 \leq m \leq 23$, one has:
	\begin{align*}
		\frac{L(m,f\times g_1)}{L(m+1,f\times g_1)}\equiv \frac{L(m,f\times g_2)}{L(m+1,f\times g_2)} \pmod{\mathfrak{P}_{13}}.
	\end{align*}
For the ratios of the right extreme critical values, one has for the Rankin--Selberg convolutions, the congruence:
\begin{align*}
		\frac{D(24,f\times g_1)}{D(25,f\times g_1)} \,\,\,{\equiv}\,\,\, \frac{D(24,f\times g_2)}{D(25,f\times g_2)} \pmod{\mathfrak{P}_{13}}, 
	\end{align*}
however, for the ratios of completed $L$-values, one has:
	\begin{align*}
		\frac{L(24,f\times g_1)}{L(25,f\times g_1)} \,\,\,{\not\equiv}\,\,\, \frac{L(24,f\times g_2)}{L(25,f\times g_2)} \pmod{\mathfrak{P}_{13}}. 
	\end{align*}
	\end{theorem}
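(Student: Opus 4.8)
The plan is to run the algorithm of Section~\ref{subsection: algorith-gl2-x-gl2} at every critical point and then to isolate the arithmetic of the abelian factor $L_N(2s+2-k-l,\chi\psi)$ at the right extreme, where the failure will originate. Here $k=26$, $l=13$, the character $\chi$ of $f$ is trivial so $\chi\psi=\psi$ is the primitive quadratic character modulo $3$, the critical set is $\{m:13\le m\le 25\}$, the line of symmetry is $s=19$, and the successive ratios to its right are indexed by $19\le m\le 24$. For each critical $m$ with $19\le m\le 25$ I would put $r=25-m$ and $\lambda=13-2r=2m-37$; these $\lambda$ run over the odd integers $1,3,\dots,13$, so $\psi(-1)=(-1)^\lambda$ and every Eisenstein series $E^*_{\lambda,3}(z,\psi)$ is holomorphic. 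Hence only case~(ii) of Lemma~\ref{lemma:h-recursive-relations} is needed to produce the holomorphic projection $h_0$, and the one genuinely new feature relative to Section~\ref{section: weight-24-x-12} is that $f$ is an \emph{oldform} at level $3$. Writing $h_0=c_0 f+c_1\hat f+(\text{newform part})$ with $\hat f(z)=f(3z)$, Step~5 must be corrected since $\langle f,\hat f\rangle\neq 0$: using Bella\"iche's relation \cite{joel} $\langle\hat f,f\rangle=\tfrac{3^{-k+1}a(3,f)}{1+3}\langle f,f\rangle$, the correct algebraic value of $D(m,f\times g)/\bigl(\pi^{26}\langle f,f\rangle\bigr)$ is $c_r\bigl(c_0+c_1\cdot 3^{-25}a(3,f)/4\bigr)$ and not $c_r c_0$.

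With this correction I would compute $D(m,f\times g_i)/\bigl(\pi^{26}\langle f,f\rangle\bigr)$ for $i=1,2$ and all $m$, form the ratios $D(m)/D(m+1)$, multiply by the common factor $L_\infty(m)L_N(2m-37,\psi)/\bigl(L_\infty(m+1)L_N(2m-35,\psi)\bigr)$ to obtain the completed ratios, and factor the ideal generated by the difference of the two ratios in $\Q(\nu_1)=\Q(\sqrt{-26})$, reading off the exponent of $\mathfrak{P}_{13}$. For $19\le m\le 23$ this exponent comes out positive, giving the asserted congruences; carrying out the same step at $m=24$ for the bare Rankin--Selberg convolution $D$ again gives a positive exponent, which is the claimed congruence of the $D(24)/D(25)$-ratios.

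The delicate point is the discrepancy at $m=24$. As the archimedean and abelian factors depend only on weights and characters, the difference of completed ratios factors as $A_{24}\bigl(D(24,f\times g_1)/D(25,f\times g_1)-D(24,f\times g_2)/D(25,f\times g_2)\bigr)$ with $A_{24}=L_\infty(24)L(11,\psi)/\bigl(L_\infty(25)L(13,\psi)\bigr)$. A short computation from \eqref{eqn:ratio-of-dirichlet} gives $A_{24}=\tfrac{39}{8}\,B_{11,\psi}/B_{13,\psi}$, and since $B_{n,\psi}=-n\,L(1-n,\psi)$ the explicit factor $39=3\cdot 13$ cancels the $13$ carried by $B_{13,\psi}$, leaving $A_{24}=\tfrac{33}{8}\,L(-10,\psi)/L(-12,\psi)$. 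The decisive coincidence is twofold: the denominator is $L(k-l,\psi)=L(13,\psi)$ with $k-l=l=13$ equal to the congruence prime, and $13$ ramifies in $\Q(\sqrt{-26})$ because $8424=2^3\cdot 3^4\cdot 13$, so $(13)=\mathfrak{P}_{13}^2$. I would then check that the extra $\mathfrak{P}_{13}$-divisibility of $L(13,\psi)$ (equivalently of $B_{13,\psi}$, or of $L(-12,\psi)$), amplified by this ramification, makes $v_{\mathfrak{P}_{13}}(A_{24})$ negative of absolute value at least the positive $\mathfrak{P}_{13}$-valuation of the $D$-ratio difference, so that the product has non-positive valuation and the completed $L$-ratios are incongruent.

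The hardest part will be pinning down these valuations \emph{exactly} rather than merely their signs: the failure of the completed congruence is an assertion of exact cancellation, so one must compute the $\mathfrak{P}_{13}$-valuation of the $D$-ratio difference and of $L(-12,\psi)$ precisely and confirm that $v_{\mathfrak{P}_{13}}(A_{24})$ is at least as negative. This is also where the oldform bookkeeping matters most, since an error in $c_1$ or in $a(3,f)$ would perturb the computed $D$-values and corrupt the valuation count; thus the reliability of the final (non-)congruence rests squarely on the Bella\"iche correction and on the decomposition of $S_{26}(\Gamma_1(3))$ into its Hecke-stable newform and oldform pieces.
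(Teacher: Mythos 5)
Your proposal is correct and takes essentially the same route as the paper: you run the Shimura--Hida algorithm of \ref{subsection: algorith-gl2-x-gl2} with exactly the oldform correction the paper prescribes via Bella\"iche's relation $\langle \hat f, f\rangle = \tfrac{3^{-25}a(3,f)}{4}\langle f,f\rangle$ (your corrected Step-5 output $c_r\bigl(c_0 + c_1\cdot 3^{-25}a(3,f)/4\bigr)$ is precisely the paper's adjustment), verify the congruences for $19\le m\le 23$ by factoring the differences in $\Q(\nu_1)$, and diagnose the $m=24$ failure through the abelian factor exactly as the paper does, which pins it on $B_{13,\psi} = -2\cdot 3^{-1}\cdot 7\cdot 13^{3}\cdot 47$ whose power of $13$ outweighs the valuation $v_{\mathfrak{P}_{13}}=1$ of the difference of the $D$-ratios. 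Your additional bookkeeping (the explicit $A_{24}=\tfrac{39}{8}\,B_{11,\psi}/B_{13,\psi}$ and the ramification of $13$ in $\Q(\sqrt{-26})$) is a consistent refinement of the paper's valuation count rather than a different method.
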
 

For $19 \leq m \leq 23$, the congruence is verified as in the previous sections. 
In the exceptional case $m = 24$, the non-congruence for the ratios of extreme critical values is explained like this. The character of $f\in S_k(\Gamma_1(3))$ 
is trivial modulo $3$ and the character of $g_1, g_2 \in S_k(\Gamma_0(3), \psi)$ is the non-trivial quadratic character modulo $3$. So
\begin{equation*}
	\frac{L_\infty(24, f\times g_1) L_N(2\cdot 24 + 2 -26-13,\chi\psi)}{L_\infty(25, f\times g_1) L_N(2\cdot 25 + 2 -26-13,\chi\psi)}  
	\ = \ \frac{L_\infty(24, f\times g_2) L(11,\psi)}{L_\infty(25, f\times g_2) L(13,\psi)} = \frac{60951}{444808}.
\end{equation*}

The first few prime ideals (written in the increasing order of their norms) appearing in the ideal factorization of the quantity 
$ \frac{L(24,f\times g_1)}{L(25,f\times g_1)} -\frac{L(24,f\times g_2)}{L(25,f\times g_2)}$ in the number field $\mathbb{Q}(\nu_1)$ is
\begin{multline*}
(\left(2, \frac{1}{18} \nu_{1}\right))^{-9} \cdot (\left(3, \frac{1}{18} \nu_{1} + 1\right))^{2} \cdot (\left(5, \frac{1}{18} \nu_{1} + 2\right))^{-1} \cdot (\left(5, \frac{1}{18} \nu_{1} + 3\right))^{-1} \cdot (\left(7, \frac{1}{18} \nu_{1} + 3\right))^{-2} \\ \cdot (\left(7, \frac{1}{18} \nu_{1} + 4\right))^{-2} \cdot (\left(13, \frac{1}{18} \nu_{1}\right))^{-2} \cdot \left(19\right)^{-1} \cdot (\left(31, \frac{1}{18} \nu_{1} + 6\right))^{-1} \cdots .
\end{multline*}
Hence
\begin{equation}\label{eqn:L24/L25}
\frac{L(24,f\times g_1)}{L(25,f\times g_1)} \ {\not\equiv} \ \frac{L(24,f\times g_2)}{L(25,f\times g_2)} \pmod{\mathfrak{P}_{13}}.
\end{equation}
In fact, 
\begin{equation}\label{eqn:d24/d25}
v_{\mathfrak{P}_{13}}\left(\frac{D(24,f\times g_1)}{D(25,f\times g_1)} - \frac{D(24,f\times g_2)}{D(25,f\times g_2)} \right) = 1.
\end{equation}
The reason for non-congruence is due to the ratio $\frac{L(11,\psi)}{L(13,\psi)}$ of the abelian $L$-functions. Since 
$L(13,\psi)$ has in it the generalized Bernoulli number $B_{13,\psi} = -1445626/3 = -1 \cdot 2 \cdot 3^{-1} \cdot 7 \cdot 13^{3} \cdot 47,$
the congruence modulo $\mathfrak{P}_{13}$ in \eqref{eqn:d24/d25} gets `cancelled' by the $13^3$ in the denominator of $\frac{L(11,\psi)}{L(13,\psi)}$ 
explaining the non-congruence in \eqref{eqn:L24/L25}.

\medskip
\subsection{$S_{24}(\SL_2(\mathbb{Z})) \times M_{12}(\SL_2(\mathbb{Z}))$}
\label{section: weight-24-x-12-ramanujan}
We may also vary the modular forms in a congruence class with one being cuspidal and the other an Eisenstein series. In this section, 
we vary the lower weight modular forms of weight $l = 12$, with one form being the Ramanujan $\Delta$-function, and the other form the weight $12$ Eisenstein series. 
More precisely, let $f$ be the newform of weight $k = 24$ for $\SL_2(\mathbb{Z})$ with $q$-expansion 
$f(q) = q + \beta_{0} q^{2} + \left(-48 \beta_{0} + 195660\right) q^{3} + O(q^{4}),$ where 
$\beta_0$ satisfies the polynomial $x^{2} - 1080 x - 20468736 \in \mathbb{Q}[x]$. This is one of the newforms from Section \ref{section: weight-24-x-12}. 
For the smaller weight $l = 12$, take $g_1: = \Delta \in S_{12}(\SL_2(\mathbb{Z}))$ and $g_2: = E_{12}(z) ={691/65520}+q+\cdots \in M_{12}(\SL_2(\mathbb{Z}))$, 
the Eisenstein series of weight $12$ and level $1$. The well-known Ramanujan congruence asserts that $ g_1 \equiv g_2 \pmod{691}.$
 
\begin{theorem} \label{thm:ramanujan-cong}
For $f \in S_{24}(\SL_2(\mathbb{Z}))$ and $g_1, g_2 \in M_{12}(\SL_2(\mathbb{Z}))$ as above
	\begin{align*}
		\frac{L(m,f\times g_1)}{L(m+1,f\times g_1)} \ \equiv \ \frac{L(m,f\times g_2)}{L(m+1,f\times g_2)}  \pmod{\mathfrak{P}_{691}}\:\:\:\mbox{for}\:\:m=22,21,20,19,18.
	\end{align*}
\end{theorem}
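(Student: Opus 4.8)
The plan is to verify the five congruences computationally by applying the algorithm of Section~\ref{subsection: algorith-gl2-x-gl2} with $f$ held fixed and $g$ taken to be each of $g_1 = \Delta$ and $g_2 = E_{12}$ in turn, exactly as in the frozen-higher-weight setup of Section~\ref{section: weight-26-x-13}. First I would record that the Ramanujan congruence is an identity of full $q$-expansions modulo $\mathfrak{P}_{691}$: the nonconstant coefficients satisfy $\tau(n)\equiv\sigma_{11}(n)\pmod{691}$, while the constant terms $a(0,\Delta)=0$ and $a(0,E_{12})=691/65520$ are both $\equiv 0$, so $g_1\equiv g_2\pmod{\mathfrak{P}_{691}}$. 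Since $\mathbb{Q}(g_1)=\mathbb{Q}(g_2)=\mathbb{Q}$, both algebraic parts $D(m,f\times g_i)/(\pi^{24}\langle f,f\rangle)$ lie in $\mathbb{Q}(\beta_0)$ and the factor $\pi^{24}\langle f,f\rangle$ cancels in each $D$-ratio. Note the algorithm applies verbatim with the non-cuspidal $g_2$, since Theorem~\ref{thm: l-value-and-petersson-inner-product} is stated for $g\in M_l(N,\psi)$.

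For each critical point I would choose the Eisenstein series via $r=23-m$ and $\lambda=2m-34$: the cases $m=21,20,19$ use the $691$-integral holomorphic series $E^*_{\lambda,1}(z,1\!\!1)$ with $\lambda=8,6,4$, while $m=18$ forces $\lambda=2$ and the non-holomorphic $E^*_{2,1}(z,1\!\!1)=\tfrac{-12}{4\pi y}+1-24q-\cdots$, whose holomorphic projection $h_0$ is extracted through branch~(i) of Lemma~\ref{lemma:h-recursive-relations}. Running Steps~3--5 returns $c_r\alpha_0=D(m,f\times g_i)/(\pi^{24}\langle f,f\rangle)$; for $g_2=E_{12}$ I would additionally cross-check against the factorization of $L(s,f\times E_{12})$ into shifted standard $L$-functions of $f$ noted in the introduction, recomputing $D(m,f\times g_2)$ by the modular-symbols algorithm of Section~\ref{subsection: algorith-gl2}. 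I would then assemble
\[
\frac{L(m,f\times g_i)}{L(m+1,f\times g_i)}=\frac{L_\infty(m)\,\zeta(2m-34)}{L_\infty(m+1)\,\zeta(2m-32)}\cdot\frac{D(m,f\times g_i)}{D(m+1,f\times g_i)}
\]
using \eqref{eqn:abelian-infinity} (with $N=1$ and trivial characters, so $L_N=\zeta$), subtract over $i=1,2$, and factor the difference as an ideal in $\mathbb{Q}(\beta_0)$ to read off its $\mathfrak{P}_{691}$-valuation. For $m\in\{18,19,20,21\}$ the archimedean--abelian prefactor is $691$-integral, so the congruence of the completed ratios follows immediately from that of the $D$-ratios, which is in turn inherited from $g_1\equiv g_2\pmod{\mathfrak{P}_{691}}$ through the $691$-integral recursion.

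The genuine difficulty is the extreme ratio $L(22,f\times g_i)/L(23,f\times g_i)$, where $691$ enters in several places at once: computing $D(23,f\times g_i)$ forces the Eisenstein series $E^*_{12,1}(z,1\!\!1)=1+\tfrac{65520}{691}q+\cdots$, which carries $691$ in its denominator, and the prefactor is $\tfrac{13650}{83611}$ with $83611=11^2\cdot 691$, since $\zeta(12)=\tfrac{691}{638512875}\pi^{12}$ places $691$ in its numerator --- every one of these occurrences tracing back to $691\mid B_{12}$, the very congruence behind Ramanujan's. The hard part is to track all these factors of $691$ and confirm that they conspire to leave the completed ratio difference $\mathfrak{P}_{691}$-integral of positive valuation; equivalently, the $D$-ratio difference must be divisible by $\mathfrak{P}_{691}$ to one order beyond what the bare congruence principle predicts, so as to absorb the $691$ sitting in the denominator of the abelian prefactor. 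This is precisely the mechanism that produced the non-congruence in the exceptional case $m=24$ of Section~\ref{section: weight-26-x-13}; the substance of the theorem is that here it instead resolves in favour of congruence, and once this $691$-accounting is confirmed the remainder is the routine \texttt{SAGE} evaluation of holomorphic projections, basis expansions, and ideal factorizations.
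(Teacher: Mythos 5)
Your proposal is correct in substance and its overall architecture --- compute the algebraic ratios for $g_1=\Delta$ and $g_2=E_{12}$ separately, subtract, and read off the $\mathfrak{P}_{691}$-valuation from an ideal factorization in $\mathbb{Q}(\beta_0)$ --- is exactly the paper's, but you diverge on how the $g_2$ side is computed. The paper never runs the Shimura--Hida projection algorithm on the Eisenstein series: it invokes Shimura's Lemma 1 to factor $D(s,f\times E_{12}) = D(s,f)D(s-11,f)/\zeta(2s-34)$ and then evaluates the standard $L$-value ratios $D(m,f)/D(m',f)$ by the modular-symbols algorithm of Section \ref{subsection: algorith-gl2} (this is the entire point of the two tables), reusing the Section \ref{section: weight-24-x-12} projection computations only for the $g_1$ side. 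You invert these roles, making the projection algorithm primary for $g_2$ and the factorization a cross-check; that route is viable, but your claim that the algorithm ``applies verbatim'' is a slight overstatement: its stated Input requires $g$ cuspidal, and with $g_2$ non-cuspidal the projection $h_0$ of $g_2\,\delta_\lambda^{(r)}E^*_{\lambda,1}$ generally acquires a nonzero Eisenstein component, so Step 4 must expand in a basis of $M_{24}(\SL_2(\mathbb{Z}))$ rather than of $S_{24}$ --- harmless, since $\langle f_\rho, E_{24}\rangle=0$ preserves the extraction of $c_r\alpha_0$, but it needs saying. Your $691$-accounting at the extreme ratio is precisely right and matches the paper's data: the prefactor $13650/83611$ with $83611=11^2\cdot 691$ forces the $D$-ratio difference to have $\mathfrak{P}$-valuation $2$ at each prime above $691$, and the paper's displayed factorization, containing both $\left(691,\tfrac{1}{24}\beta_0-12\right)$ and $\left(691,\tfrac{1}{24}\beta_0+658\right)$ to the first power in the completed-ratio difference, confirms exactly this; you also correctly flag the structural parallel with the $m=24$ non-congruence of Thm.\,\ref{thm:exceptional-example}. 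One caution: your phrase that the congruence of $D$-ratios is ``inherited from $g_1\equiv g_2$ through the $691$-integral recursion'' should not be read as an argument --- that transfer is precisely the principle under investigation, and neither you nor the paper proves it here; the actual proof content, in both your plan and the paper, is the explicit SAGE evaluation and ideal factorization, which you do include as the final step.
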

The $p$-th Fourier coefficient of $E_{12}$ being $p^{11}+1$, one has $L(s, f \times E_{12}) = L(s,f)L(s+11,f)$, 
it is probably more enlightening to write the above congruence as: 
$$
\frac{L(m,f\times \Delta)}{L(m+1,f\times \Delta)} \ \equiv \ \frac{L(m,f) L(m-11,f)}{L(m+1,f)L(m-10,f)}  \pmod{\mathfrak{P}_{691}}
$$
In a similar vein, by Shimura \cite[Lem.\,1]{shimura}, one has:
\begin{equation*}
	D(s,f\times g_2) = \frac{D(s,f)\cdot D(s-11,f)}{\zeta(2s-34)}, \label{eqn: product-of-smaller-L-functions}
\end{equation*}
where $\zeta(s)$ is the Riemann zeta function and $D(s,f) = \sum_{n=1}^{\infty} a(n,f) n^{-s}$. Therefore,
\begin{align*}
	\dfrac{D(m,f,g_2)}{D(m+1,f,g_2)} 
	&= \frac{\zeta(2m+2 -34)}{\zeta(2m -34)}\dfrac{D(m , f)}{D(m -10 , f)} \dfrac{D(m-11, f)}{ D(m + 1, f)}\\
	&= \left(\frac{1}{(2 \pi \sqrt{-1})^2} \frac{\zeta(2m+2 -34)}{\zeta(2m -34)}\right) \cdot \left(\dfrac{D(m , f)}{(2 \pi \sqrt{-1})^{-1}D(m -10 , f)} \right) \\
	&\qquad\qquad \qquad \cdot \left( \dfrac{D(m-11, f)}{(2 \pi \sqrt{-1})^{-1} D(m + 1, f)} \right).
\end{align*}
The first term inside $\left( \cdots \right)$ is rational due to well known result on the special values of the Riemann zeta function. 
The second and the third term are algebraic due to equations \eqref{eqn: odd-l-value} and \eqref{eqn: even-l-value}. Also, the reader is reminded that the form $f$ has totally real coefficients.
Hence we use the algorithms described in \ref{subsubsection: algorithm-gl2-even} and \ref{subsubsection: algorithm-gl2-odd} to calculate the ratios of the special values 
$\frac{L(m, f\times g_2)}{L(m+1, f\times g_2)}.$
The ratios $\frac{D(m,f)}{(-2\pi \sqrt{-1})^{m-1}D(1,f)}$ and $\frac{D(m,f)}{(-2\pi \sqrt{-1})^{m-2} D(2,f)}$ depending on the pairity of $m$ are summarized in the table.  

\medskip
\def\arraystretch{1.5}
\begin{tabular}{|c|c|}
	\hline
	$m$ & $\frac{D(m,f)}{(-2\pi \sqrt{-1})^{m-1} D(1,f)}$\\
	\hline
	\hline
	$1$ & $1$\\
	\hline
	
	$3$ & $-\frac{569}{18825760800} \beta_{0} - \frac{16757416}{196101675}$\\
	\hline
	$5$ & $\frac{3403}{218993544000} \beta_{0} + \frac{40511069}{4562365500}$\\
	\hline
	$7$ & $-\frac{147089}{17869873190400} \beta_{0} - \frac{854671739}{744578049600}$\\
	\hline
	$9$ & $\frac{6538127}{1250891123328000} \beta_{0} + \frac{10060850717}{52120463472000}$\\
	\hline
	$11$ & $-\frac{320477}{77436117158400} \beta_{0} - \frac{169463087}{3226504881600}$ \\
	\hline
	$13$ & $\frac{320477}{77436117158400} \beta_{0} + \frac{169463087}{3226504881600}$\\
	\hline
	$15$ & $-\frac{6538127}{1250891123328000} \beta_{0} - \frac{10060850717}{52120463472000}$\\
	\hline
	$17$ & $\frac{147089}{17869873190400} \beta_{0} + \frac{854671739}{744578049600}$\\
	\hline
	$19$ &  $-\frac{3403}{218993544000} \beta_{0} - \frac{40511069}{4562365500}$\\
	\hline
	$21$ &  $\frac{569}{18825760800} \beta_{0} + \frac{16757416}{196101675}$\\
	\hline
	$23$ &  $-1$\\
	\hline
\end{tabular}
\quad
\begin{tabular}{|c|c|}
 		\hline
 		$m$ & $\frac{D(m,f)}{(-2\pi \sqrt{-1})^{m-2} D(2,f)}$\\
 		\hline
 		\hline
 		$2$ & $1$\\
 		\hline
 		$4$ & $-\frac{1}{15120000} \beta_{0} - \frac{59221}{630000}$\\
 		\hline
 		$6$ & $\frac{1}{26593920} \beta_{0} + \frac{12031}{1108080}$\\
 		\hline
 		$8$ & $-\frac{1783}{80372736000} \beta_{0} - \frac{5312293}{3348864000}$\\
 		\hline
 		$10$ & $\frac{37}{2344204800} \beta_{0} + \frac{31327}{97675200}$\\
 		\hline
 		$12$ & $-\frac{847}{60279552000} \beta_{0} - \frac{371437}{2511648000}$ \\
 		\hline
 		$14$ & $\frac{37}{2344204800} \beta_{0} + \frac{31327}{97675200}$\\
 		\hline
 		$16$ & $-\frac{1783}{80372736000} \beta_{0} - \frac{5312293}{3348864000}$\\
 		\hline
 		$18$ & $\frac{1}{26593920} \beta_{0} + \frac{12031}{1108080}$\\
 		\hline
 		$20$ &  $-\frac{1}{15120000} \beta_{0} - \frac{59221}{630000}$\\
 		\hline
 		$22$ &  $1$\\
 		\hline
\end{tabular} 

\medskip

From the table above one concludes
\begin{align*}
	\frac{L(22,f\times g_2)}{L(23,f\times g_2)} &= -\frac{19153}{3052249200} \beta_{0} + \frac{23359724}{63588525}.
\end{align*}
Similarly, for the other ratios in the right hand side of Thm.\,\ref{thm:ramanujan-cong} are determined.
For the left hand side of Thm.\,\ref{thm:ramanujan-cong}, note that  $\frac{D(m,f \times g_1)}{\pi^{24} \langle f \times f\rangle}$ and 
the ratios of successive critical values $\frac{L(m, f\times g_1)}{L(m+1, f\times g_1)}$ have already been calculated in Sect.\,\ref{section: weight-24-x-12}.
The prime ideal factorization of the difference of the ratios has the following shape: 
\begin{multline*}
\frac{L(22,f\times g_1)}{L(23,f\times g_1)} - \frac{L(22,f\times g_2)}{L(23,f\times g_2)} =
\cdots (\left(13, \frac{1}{24} \beta_{0} - 20\right))^{-1} \cdot (\left(73, \frac{1}{24} \beta_{0} - 6\right)) \\ \cdot (\left(691, \frac{1}{24} \beta_{0} - 12\right)) \cdot (\left(691, \frac{1}{24} \beta_{0} + 658\right)) \cdot (\left(23003, \frac{1}{24} \beta_{0} + 2325\right)).
\end{multline*}
The only prime that ramifies in $\mathbb{Q}(\beta_0)$ is $144169.$ The ideal $\mathfrak{P}_{691} := (691) \subset \mathbb{Q}(\beta_0)$ is split 
and has the factorization $ (\left(691, \frac{1}{24} \beta_{0} - 12\right)) \cdot (\left(691, \frac{1}{24} \beta_{0} + 658\right))$. Hence
\begin{equation*}
	\frac{L(22,f\times g_1)}{L(23,f\times g_1)} \equiv \frac{L(22,f\times g_2)}{L(23,f\times g_2)} \pmod{\mathfrak{P}_{691}}.
\end{equation*}
Same conclusion holds for other ratios as well.

\bigskip
\section{A conjecture on the ratios of the special values of the Rankin-Selberg L-functions}
The main purpose of this article is to investigate the principle:
$$
f \equiv f' \pmod{\fP} \ \ \implies \ \ \frac{L(m, f \times g)}{L(m+1, f \times g)} \equiv \frac{L(m, f' \times g)}{L(m+1, f' \times g)} \pmod{\fP}.
$$
Keeping the above examples in mind which lend credence to this principle, while 
accounting for the exceptional situation as in Thm.\,\ref{thm:exceptional-example}, we formulate the following conjecture:

\begin{conjecture}
\label{conj:main}
	Let $f, f' \in S_k(N,\chi)$  and $g \in S_l(M, \psi)$ be normalized eigenforms with $k - l >2$. 
	Assume that for a prime ideal $\mathfrak{P}$ in the compositum of the number fields $\mathbb{Q}(f),$ $\mathbb{Q}(f'),$ and $\mathbb{Q}(g)$ 
	one has the congruence: 
	\begin{equation*} a(n,f) \equiv a(n,f') \pmod{\mathfrak{P}}\;\;\text{ for all }n\in\mathbb{N}.
	\end{equation*}
For an integer $m$ with $
\lceil \frac{k+l-1}{2} \rceil \leq m,m+1 \leq k-1,$ assume: 
	\begin{equation}
	\label{hyp: hypotheis-on-the-prime-ideal}
	v_{\mathfrak{P}}\left( 
	\frac{L_\infty(m, f\times g) L_N(2m+2-k-l, \chi \psi)}{L_\infty(m+1, f\times g) L_N(2m+4-k-l, \chi \psi)}
	\right) \geq 0.
	\end{equation}
	Then for all such $m$ one has the congruence:
	\begin{align}
		\frac{L(m,f\times g)}{L(m+1,f\times g)} \ \equiv \ \frac{L(m,f'\times g)}{L(m+1,f'\times g)} \pmod{\mathfrak{P}}. \label{eqn: main-question}
	\end{align}
	 Now, let $f \in S_k(N,\chi)$ and $g, g' \in S_l(M,\psi)$ be normalized eigenforms and $\mathfrak{P} \subset \mathbb{Q}(f)\mathbb{Q}(g')\mathbb{Q}(g)$ be a prime ideal such that
	\begin{equation*}
		a(n,g) \equiv a(n,g') \pmod{\mathfrak{P}} \text{ for  all } n \in \mathbb{N}.
	\end{equation*}
	Assume \eqref{hyp: hypotheis-on-the-prime-ideal} is satisfied. Then 
	\begin{align}
		\frac{L(m,f\times g)}{L(m+1,f\times g)} \ \equiv \ \frac{L(m,f\times g')}{L(m+1,f\times g')} \pmod{\mathfrak{P}}.\label{eqn: dual-main-question}
	\end{align} \label{conjecture: main-conjecture}
\end{conjecture}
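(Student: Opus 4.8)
The plan is to strip off the archimedean and abelian factors, reduce the statement to a congruence between ratios of the Rankin--Selberg convolutions $D(s,f\times g)$, and then read that congruence off from the $\mathfrak{P}$-integral structure of the holomorphic projection underlying the algorithm of \ref{subsection: algorith-gl2-x-gl2}. First I would use \eqref{eqn:abelian-infinity} to write, for each critical $m$,
\[
\frac{L(m,f\times g)}{L(m+1,f\times g)} \;=\; A_m\cdot \frac{D(m,f\times g)}{D(m+1,f\times g)},
\]
where $A_m$ is the ratio at $m$ and $m+1$ of the factors $L_\infty(\cdot)\,L_N(\cdot,\chi\psi)$. By \eqref{eqn: infinite-and-abelian-L-function} the number $A_m$ depends only on the weights and characters, hence is \emph{identical} for $f$ and $f'$, and by hypothesis \eqref{hyp: hypotheis-on-the-prime-ideal} it satisfies $v_{\mathfrak{P}}(A_m)\ge 0$. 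Since the difference of the two sides of \eqref{eqn: main-question} is then $A_m$ times the difference of the convolution ratios, it suffices to prove $\frac{D(m,f\times g)}{D(m+1,f\times g)}\equiv \frac{D(m,f'\times g)}{D(m+1,f'\times g)}\pmod{\mathfrak{P}}$. That \eqref{hyp: hypotheis-on-the-prime-ideal} cannot be dropped is exactly the mechanism behind Thm.\,\ref{thm:exceptional-example}, where $v_{\mathfrak{P}}(A_m)<0$ destroys the congruence.

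Next I would make the convolution ratio explicit through the algorithm. For a critical point $m$ set $r=k-1-m$ and let $h_0^{(m)}$ be the holomorphic projection of $g\,\delta_\lambda^{(r)}E_{\lambda,N}^{*}(z,\chi\psi)$ furnished by Lemma \ref{lemma:h-recursive-relations}. Theorem \ref{thm: l-value-and-petersson-inner-product} together with Lemmas \ref{lemma: fgdelta_fh0} and \ref{lemma: zero-lemma} gives $\frac{D(m,f\times g)}{\pi^{k}\langle f,f\rangle}=c_r\,\alpha_0(m,f)$, where $\alpha_0(m,f)$ is the coefficient of $f_\rho$ in the eigenbasis expansion of $h_0^{(m)}$; the decisive point is that $h_0^{(m)}$ depends only on $g$ and $m$, never on $f$. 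Taking the quotient at $m$ and $m+1$ cancels the transcendental factor $\pi^{k}\langle f,f\rangle$ and leaves $\frac{D(m,f\times g)}{D(m+1,f\times g)}=\frac{c_{k-1-m}}{c_{k-2-m}}\cdot\frac{\alpha_0(m,f)}{\alpha_0(m+1,f)}$, with the rational prefactor common to $f$ and $f'$. The whole problem is thereby reduced to the congruence of the coordinate ratios $\alpha_0(m,f)/\alpha_0(m+1,f)$.

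This is where the congruence $a(n,f)\equiv a(n,f')\pmod{\mathfrak{P}}$ is used: it forces $f_\rho$ and $f'_\rho$ to share a system of Hecke eigenvalues modulo $\mathfrak{P}$, so they sit in one residual eigenspace of $S_k$. Fixing a uniformizer $\varpi$ at $\mathfrak{P}$ and a congruence-adapted $\mathcal{O}_{\mathfrak{P}}$-basis of the lattice of $\mathfrak{P}$-integral forms---one exhibiting $f_\rho$ together with the congruence direction $(f_\rho-f'_\rho)/\varpi^{e}$, where $\varpi^{e}$ is the exact power dividing $f_\rho-f'_\rho$---I would expand the integral form $h_0^{(m)}=\beta_m f_\rho+\gamma_m (f_\rho-f'_\rho)/\varpi^{e}+\cdots$ and pass back to the eigenbasis. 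The upshot is that the \emph{pole parts} of $\alpha_0(m,f)$ and $\alpha_0(m,f')$ in $\varpi$ agree up to sign, so both ratios reduce modulo $\mathfrak{P}$ to the single quantity $\gamma_m/\gamma_{m+1}$, establishing the congruence of the first paragraph. The dual statement \eqref{eqn: dual-main-question}, with $f$ fixed and $g\equiv g'$, is easier: the recursions of Lemma \ref{lemma:h-recursive-relations} are linear in $g$ and its derivatives $d^{j}g$ with $g$-independent coefficients, and $d=q\frac{d}{dq}$ preserves congruences, so $h_0^{(m,g)}\equiv h_0^{(m,g')}\pmod{\mathfrak{P}}$ and the \emph{fixed} projection onto $f_\rho$ carries the congruence to the coordinate ratios.

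The hard part will be the third step, and it has three integrality ingredients. One must first prove that the holomorphic projections $h_0^{(m)}$ are $\mathfrak{P}$-integral: the recursion of Lemma \ref{lemma:h-recursive-relations} divides by quotients of $\Gamma$-values and, in the non-holomorphic case $\lambda=2$, $\chi\psi=1\!\!1$, drags in the Hecke constant $c$ of $E_{2,N}^{*}$, so the primes occurring in these denominators have to be controlled---this is the place where \eqref{hyp: hypotheis-on-the-prime-ideal} is really needed. One must next rule out the degenerate case: the coordinate ratios are defined, and congruent, only when the congruence-direction component $\gamma_{m+1}$ is a $\mathfrak{P}$-adic unit, and $\gamma_{m+1}\equiv 0\pmod{\mathfrak{P}}$ would require a finer analysis. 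Finally one must know that the congruence-adapted basis genuinely spans the integral lattice, i.e.\ control the multiplicity of the residual eigensystem in $S_k$; when $f'=f^{\sigma}$ is a Galois conjugate (cases (1) and (2)) this is automatic because $\alpha_0(m,f')=\alpha_0(m,f)^{\sigma}$ and $\sigma$ acts trivially on the residue field at the ramified $\mathfrak{P}$, but the genuinely non-conjugate case (case (3)) is exactly the situation where the Eisenstein-cohomology methods of \cite{harder-raghuram}, used in the companion paper \cite{narayanan-raghuram}, provide the natural and more robust argument.
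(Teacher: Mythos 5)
You should note first that the statement you set out to prove is, in the paper, a \emph{conjecture}: the paper contains no proof of it at all. What the paper offers is computational verification in the five families of examples of Section 3 (via the Shimura--Hida holomorphic-projection algorithm of \ref{subsection: algorith-gl2-x-gl2}, with the hypothesis \eqref{hyp: hypotheis-on-the-prime-ideal} inserted precisely to account for the failure in Thm.\,\ref{thm:exceptional-example}), and a pointer to the companion article \cite{narayanan-raghuram}, where a \emph{variation} of the conjecture is proved by the Eisenstein-cohomology machinery of \cite{harder-raghuram} --- a route entirely independent of the integrality analysis you attempt. That said, your first two reductions are sound and faithfully capture the structure the paper's computations exploit: by \eqref{eqn: infinite-and-abelian-L-function} the factor $A_m$ is identical for $f$ and $f'$, hypothesis \eqref{hyp: hypotheis-on-the-prime-ideal} gives $v_{\mathfrak{P}}(A_m)\geq 0$, so \eqref{eqn: main-question} reduces to the congruence of the $D$-ratios (and this correctly isolates the mechanism of the exceptional case, where $v_{\mathfrak{P}}(A_m)=-3$ from $B_{13,\psi}$ overwhelms the valuation-$1$ congruence \eqref{eqn:d24/d25}); and by Theorem \ref{thm: l-value-and-petersson-inner-product} with Lemmas \ref{lemma: fgdelta_fh0} and \ref{lemma: zero-lemma}, the period $\pi^{k}\langle f,f\rangle$ cancels in the ratio, leaving the coordinate ratios $\alpha_0(m,f)/\alpha_0(m+1,f)$ with $h_0^{(m)}$ independent of $f$.

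The genuine gap is your third step, and it is not a technicality you may defer: it is the entire content of the conjecture. The assertion that the ``pole parts'' of $\alpha_0(m,f)$ and $\alpha_0(m,f')$ agree up to sign, so that both ratios reduce to $\gamma_m/\gamma_{m+1}$, rests on three unproved ingredients which you yourself list but do not supply. (i) $\mathfrak{P}$-integrality of $h_0^{(m)}$: the recursion of Lemma \ref{lemma:h-recursive-relations} divides by $\Gamma(k-\nu)/\Gamma(k-2\nu)$ and, when $\lambda=2$ with trivial $\chi\psi$, drags in Hecke's constant $c$; moreover the explicit factorizations in Section \ref{section: weight-24-x-12} show the $\alpha_0$'s carry denominators at many small primes, so integrality holds at best away from an explicit bad set whose relation to $\mathfrak{P}$ is never controlled --- note also that hypothesis \eqref{hyp: hypotheis-on-the-prime-ideal} constrains only $A_m$, not these denominators, so it cannot do the work you assign it here. (ii) Non-degeneracy: your reduction of the ratios to $\gamma_m/\gamma_{m+1}$ is vacuous unless $\gamma_{m+1}$ is a $\mathfrak{P}$-unit, and nothing in the hypotheses rules out $\gamma_{m+1}\equiv 0\pmod{\mathfrak{P}}$; since the individual ratios need not even be $\mathfrak{P}$-integral (the conjecture only asserts positive valuation of the \emph{difference}), the phrase ``reduce modulo $\mathfrak{P}$'' already presupposes structure that can fail. (iii) Residual multiplicity: your congruence-adapted lattice argument spans the eigenspace only under a multiplicity-one hypothesis mod $\mathfrak{P}$, which you verify only in the Galois-conjugate cases; the paper's example (3), where $f_1$ and $f_2$ are \emph{not} conjugate and $\mathfrak{P}=13$, is exactly the configuration your argument does not cover. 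The same denominator problem silently undermines your ``easier'' dual case: even granting $h_0^{(m,g)}\equiv h_0^{(m,g')}\pmod{\mathfrak{P}}$, the extraction of the coefficient of $f_\rho$ is a linear functional whose $\mathfrak{P}$-integrality fails precisely when $f$ admits congruences with other eigenforms in $S_k(N,\chi)$ --- the standard congruence-module obstruction. In short, your proposal is a plausible program, not a proof; the authors' choice to prove only a variation of the statement, by cohomological rather than integrality methods, is itself evidence that the route you sketch is not known to close.
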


In the preceding section, we computationally verified this Conjecture \ref{conjecture: main-conjecture} in many examples. 
Especially, note that the hypothesis \eqref{hyp: hypotheis-on-the-prime-ideal} has been introduced to take care of the exceptional situation in 
Thm.\,\ref{thm:exceptional-example}. As mentioned in the introduction, in the companion article \cite{narayanan-raghuram}, we prove a 
variation of this conjecture using the machinery of Eisenstein cohomology as developed by 
Harder and the second author in \cite{harder-raghuram}.

\bigskip

\noindent {\it Acknowledgements:} 
The authors are grateful to the Institute for Advanced Study, Princeton, for a summer collaborator's grant in 2023 when this project got started. 
The authors also thank Baskar Balasubramanyam for some invaluable comments. The calculations were carried out at the Bhaskara Lab of IISER Pune. 
The first author is supported by the CSIR fellowship for his Ph.D.

\bigskip

\bigskip

\bigskip

\end{document}